\newcommand{\R}{\mathbb{R}}
\newcommand{\re}{\operatorname{Re}}
\newcommand{\im}{\operatorname{Im}}
\newcommand{\ext}[0]{\ensuremath{\mathrm{ext}}}
\numberwithin{equation}{section}
\numberwithin{figure}{section}
\theoremstyle{plain} %text of this environment is typesetted in italics
\newtheorem{theorem}{Theorem}[section]
\newtheorem{lemma}[theorem]{Lemma}
\newtheorem{corollary}[theorem]{Corollary}
\newtheorem{proposition}[theorem]{Proposition}
\newtheorem{conjecture}[theorem]{Conjecture}
\theoremstyle{definition} %text of this environment is typesetted in roman letters
\theoremstyle{remark}
\newtheorem{remark}[theorem]{Remark}
\def \tD {\mathrm{tD}}
\def \tDD {\mathrm{t\Delta}}
\def \tP {\mathrm{tP}}
\def \tPP {\mathrm{t\Pi}}
\def \oP {\mathrm{oP}}
\def \oD {\mathrm{oD}}
\def \oDD {\mathrm{o\Delta}}
\newcommand{\cM}{\mathcal{M}}
\newcommand{\cN}{\mathcal{N}}
\newcommand{\cD}{\mathcal{D}}
\newcommand{\cT}{\mathcal{T}}
\newcommand{\KK}{\bar{K}}
\newcommand{\EE}{\bar{E}}
\begin{document}

\title[A new $\oD$ family]{A new deformation family of Schwarz' D surface}

\author{Hao Chen}
\address[Chen]{Georg-August-Universit\"at G\"ottingen, Institut f\"ur Numerische und Angewandte Mathematik}
\email{h.chen@math.uni-goettingen.de}
\thanks{H.\ Chen is supported by Individual Research Grant from Deutsche Forschungsgemeinschaft within the project ``Defects in Triply Periodic Minimal Surfaces'', Projektnummer 398759432.}

\author{Matthias Weber}
\address[Weber]{Indiana University, Department of Mathematics}
\email{matweber@indiana.edu}

\keywords{Triply periodic minimal surfaces}
\subjclass[2010]{Primary 53A10}

\date{\today}

\begin{abstract}
	We prove the existence of a new 2-parameter family $\oDD$ of embedded triply
	periodic minimal surfaces of genus 3.  The new surfaces share many properties
	with classical orthorhombic deformations of Schwarz' D surface, but also
	exotic in many ways.  In particular, they do not belong to Meeks'
	5-dimensional family.  Nevertheless, $\oDD$ meets classical deformations in a
	1-parameter family on its boundary.
\end{abstract}

\maketitle

\section{Introduction}

This is the first of two papers dealing with new 2-dimensional families of
embedded triply periodic minimal surfaces (TPMS) of genus three whose
1-dimensional ``intersections'' with the well-known Meeks family exhibit
singularities in the moduli space of TPMS.

In the past three decades, the classification of complete, embedded minimal
surfaces of finite topology in Euclidean space forms has largely been
accomplished for the smallest reasonable genus
(\cite{meeks1998,lazard-holly2001,perez2005,meeks2005,perez2007}).  In all
these cases, the moduli space of these surfaces has been found to be a smooth
manifold.

In the case of triply periodic minimal surfaces, no such classification has
been found. For the lowest possible genus 3, there is an explicit 5-dimensional
smooth family described by Meeks \cite{meeks1990} that contains most of the
then known examples, with the notable exception of Schwarz' H surfaces and
Schoen's Gyroid.  Work of Traizet~\cite{traizet2008} implies that the
H-surfaces belong to a second 5-dimensional family for which no explicit
description is known.  Our other paper will explore that family.

In this paper, we construct a new 2-dimensional family of embedded triply
periodic minimal surfaces of genus 3 that does not belong to the Meeks family
but whose closure meets the Meeks family in a 1-dimensional subset.  More
specifically, the surfaces in this subset are \emph{bifurcation instances} in
the sense that, with the same deformation of their lattices, they may deform
either within a classical 2-parameter Meeks family, or into a new 2-parameter
non-Meeks family.  Existence of the latter is the focus of this paper.

In fact, all these surfaces can be seen as orthorhombic deformations of
Schwarz' D surface.  Hence we begin with a description of the classical
orthorhombic deformations of D, which all belong to the Meeks family.

\medskip

\begin{figure}[hbt]
	\includegraphics[width=0.95\textwidth]{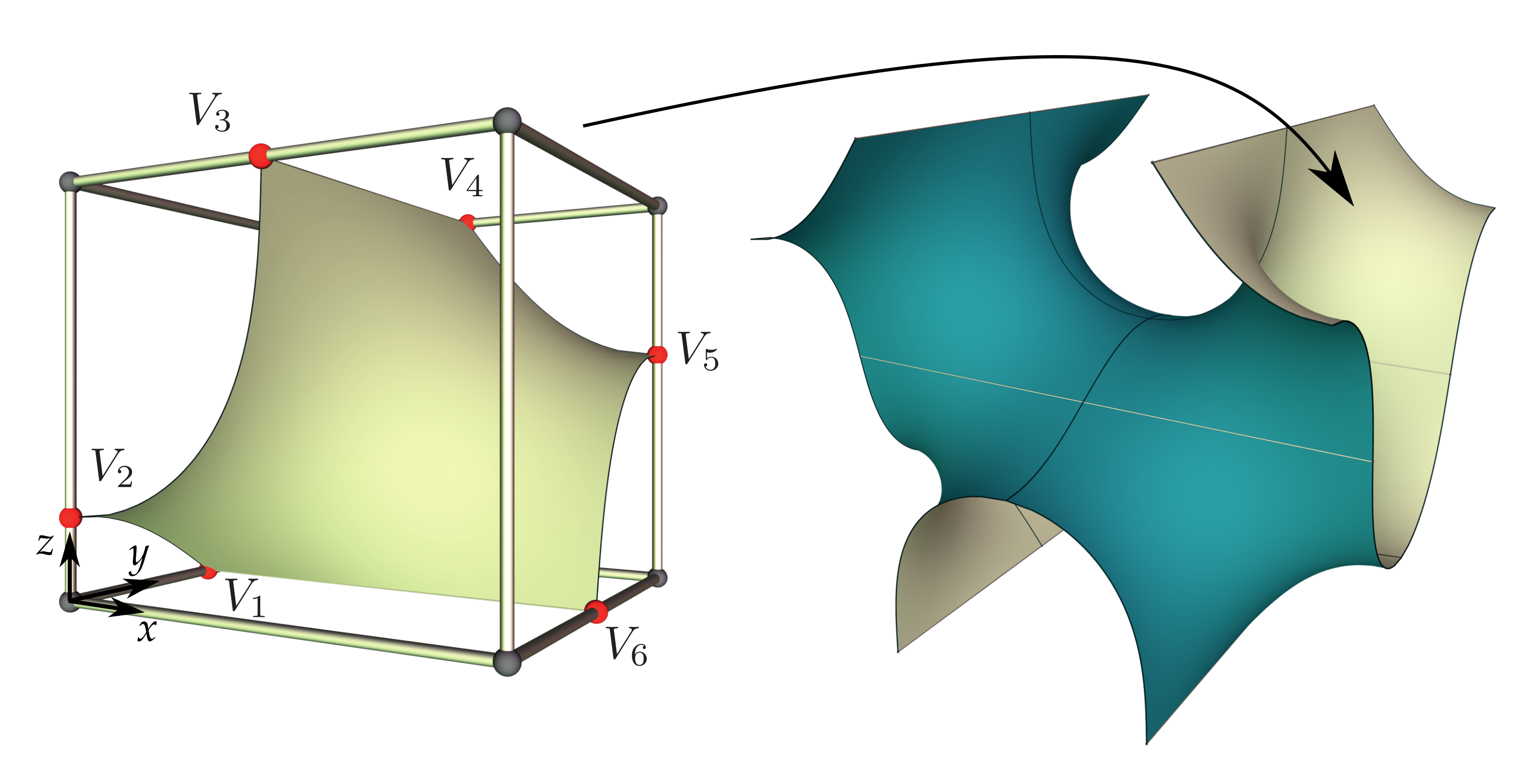}
	\caption{Fundamental Piece and Translational Fundamental Piece}
	\label{fig:fundpieces}
\end{figure}

Consider an embedded minimal surface $S$ inside an axis parallel box
$[-A,A]\times[-B,B]\times[0,1]$ that solves the following partially free
boundary problem: $S$ satisfies free boundary condition on the vertical planes
$x=\pm A$ and $y=\pm B$, and fixed (Plateau) boundary condition on the
horizontal segments $\{(x,0,0) \mid -A\le x \le A\}$ and $\{(0,y,1) \mid -B \le
y \le B\}$, and the intersection of $\partial S$ with each face of the box has
at most one component.  $S$ is therefore a right-angled embedded minimal
hexagon.  See Figure \ref{fig:fundpieces} (left) for an example.

Because the two horizontal segments are in the middle of the top and bottom
faces of the box, rotations about them and reflections in the lateral faces of
the box extend $S$ to an embedded TPMS $\tilde \Sigma$.  More specifically,
$\tilde \Sigma$ is invariant under the lattice $\Lambda$ spanned by $(4A,0,0)$,
$(0,4B,0)$ and $(2A,2B,2)$.  In the 3-torus $\R^3 / \Lambda$, $\Sigma = \tilde
\Sigma / \Lambda$ is a compact surface of genus $3$.  In
Figure~\ref{fig:fundpieces} (right) we show a translational fundamental domain
of $\tilde \Sigma$ nicely presented in a box.  It consists of eight copies of
$S$.

\begin{remark}
	For crystallographers, the orthorhombic lattice spanned by $(4A,0,0)$,
	$(0,4B,0)$ and $(0,0,4)$ is probably more convenient.  This is responsible
	for the letter ``o'' in our naming.  The quotient of $\tilde\Sigma$ by this
	lattice is a double cover of $\Sigma$, hence of genus $5$. 
\end{remark}

We use $\cD$ to denote the set of all TPMS obtained in this way.

\begin{figure}[hbt]
	\includegraphics[width=0.5\textwidth]{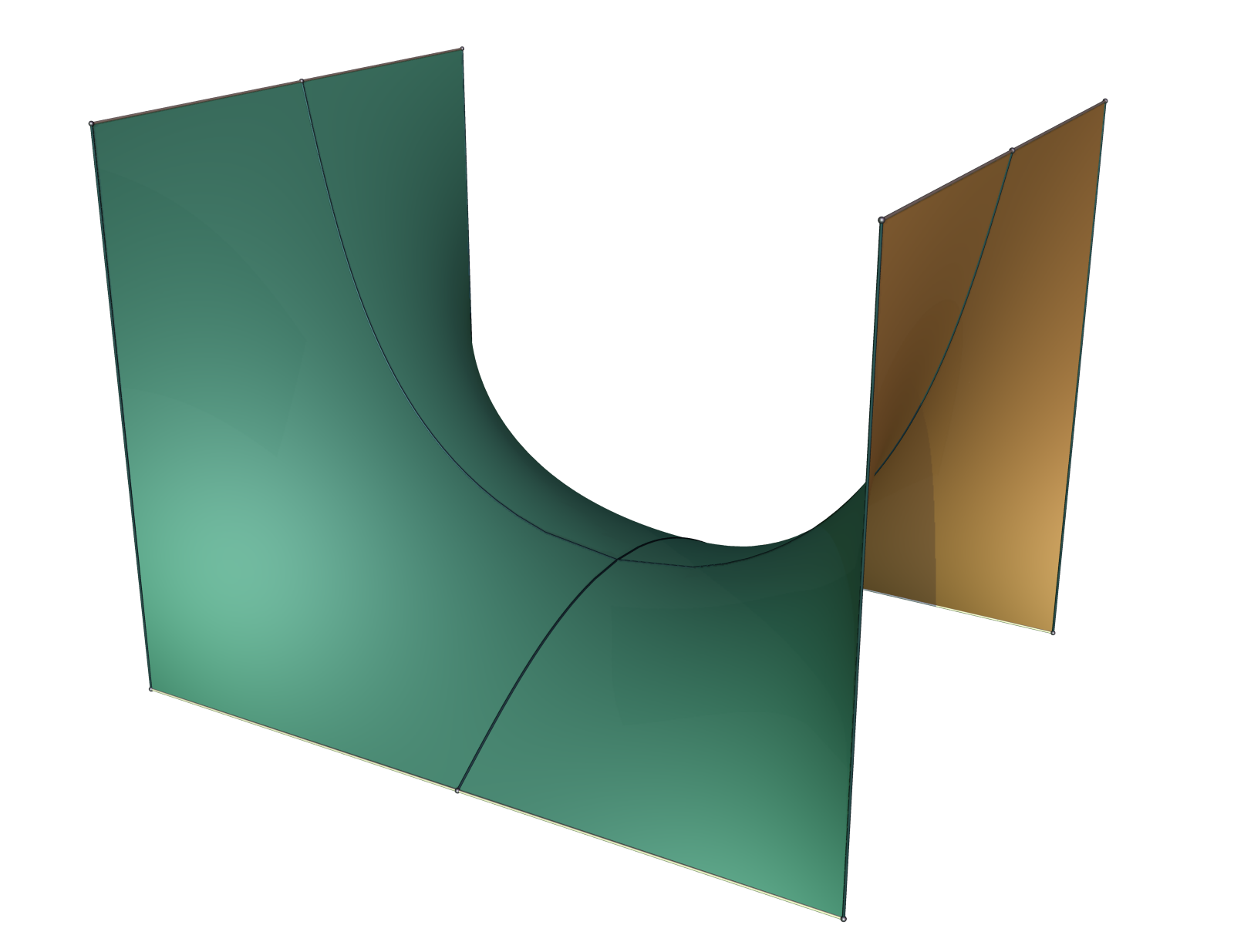}
	\caption{Plateau construction of $\oD$ surfaces}
	\label{fig:plateauD}
\end{figure}

A well-known family of surfaces in $\cD$ is the $\tD$ family of H.\ A.\
Schwarz, which is a tetragonal deformation family of his famous D surface.
They are obtained as described above with $A = B$ and $S$ containing the
vertical segment $\{(0,0,z) \mid 0 \le z \le 1\}$.  The same construction also
applies when $A \ne B$, yielding an orthorhombic deformation of Schwarz' D
surface, known as $\oD b$ in the literature to distinguish from another
orthorhombic deformation family $\oD a$; see~\cite{fischer1989,fogden1992}.  In
this paper, we simply use $\oD$ in place of $\oD b$.

An alternative (better known) construction of an $\oD$ surface starts with a
box of the same dimensions and then solves the Plateau problem for a polygonal
contour running along edges of the box, as shown in Figure~\ref{fig:plateauD}.
The Plateau solution is unique and therefore shares the symmetries of the
contour.  In particular, it has reflectional symmetries by vertical planes.  To
relate with the previous construction, just divide the minimal surface into
quarts by cutting along these planes, then extend one of the quarts by rotating
it about its vertical edge.

\medskip

The main result of this paper is to confirm the existence of another
2-parameter family in $\cD$.

\begin{theorem}\label{thm:main}
	There exists a second 2-parameter continuous family $\oDD$ in $\cD$, lacking
	the vertical straight line of the $\oD$ surfaces.
\end{theorem}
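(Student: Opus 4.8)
\emph{Proof proposal.}
The plan is to represent the members of $\cD$ by their Weierstrass data, to use the orthorhombic symmetry to reduce the closing-up conditions to a single real period equation, and then to obtain $\oDD$ as a branch of solutions of that equation which bifurcates off the classical $\oD$ family along a curve of bifurcation instances.

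For the conformal set-up: a surface $\tilde\Sigma/\Lambda$ in $\cD$ carries an orientation-preserving isometry group of order $4$ --- the three half-turns about the coordinate axes, coming from the rotations about the two horizontal Plateau segments and their composite --- and quotienting by it sends the genus-$3$ surface to a sphere with six order-two cone points. The Gauss map $G$ and the height differential $dh$ descend to explicit meromorphic data on $\widehat{\C}$, determined (up to the standard divisor/regularity conditions) by the conformal positions of these six points; after normalising by the Möbius group and imposing the reality conditions forced by the orthorhombic mirrors, one is left with a real $3$-dimensional moduli space $\mathcal{M}$ of such data. Inside $\mathcal{M}$, the $\oD$ surfaces form a hypersurface $\mathcal{L}$: requiring the surface to contain the vertical segment $\{(0,0,z)\}$, i.e. that its two symmetry curves in the planes $x=0$ and $y=0$ run along that line, is a single real condition on the branch data. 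Fix a coordinate $t$ on $\mathcal{M}$ transverse to $\mathcal{L}$, so that $\mathcal{L}=\{t=0\}$.

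The requirement that the periods of $Gdh$, $dh/G$ and $dh$ lie in the target lattice $\Lambda$ reduces, once all of the symmetry has been exploited, to a single real period equation $\mathcal{P}=0$ for a smooth $\mathcal{P}\colon\mathcal{M}\to\R$. The classical existence of the $\oD$ family (e.g. via the Plateau solution over a box contour, which is unique, embedded, and symmetric) says exactly that $\mathcal{P}$ vanishes identically on $\mathcal{L}$; hence $\mathcal{P}=t\,Q$ with $Q$ smooth, and near $\mathcal{L}$ the solution set splits as $\{\mathcal{P}=0\}=\mathcal{L}\cup\{Q=0\}$. The new family is $\oDD:=\{Q=0\}$: provided $Q$ vanishes somewhere on $\mathcal{L}$ but not identically, $\{Q=0\}$ is a smooth hypersurface genuinely distinct from $\mathcal{L}=\{t=0\}$, its generic members therefore lack the vertical line, and it meets $\oD$ along the curve $\mathcal{B}=\{t=0,\ Q=0\}$ of bifurcation instances. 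A dimension count (as for $\oD$, one surface per box) shows that $\oDD$ is a genuine $2$-parameter family realising an open set of orthorhombic lattices, and the ``one component per face'' condition, hence membership in $\cD$, is inherited from the $\oD$ surfaces near $\mathcal{B}$. Embeddedness likewise propagates: it is an open condition for the complete, proper, box-confined surfaces produced by the Weierstrass data, so the $\oDD$ surfaces near $\mathcal{B}$ are embedded because the adjacent $\oD$ ones are, and a continuation argument together with a maximum-principle separation carries embeddedness along the rest of the branch.

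The main obstacle is to show that $Q$ vanishes somewhere on $\mathcal{L}$ without vanishing identically there --- equivalently, that the derivative $\partial_t\mathcal{P}$ of the reduced period map transverse to the $\oD$ locus changes sign along it. This is the eigenvalue-crossing that powers the bifurcation, and $\partial_t\mathcal{P}|_{\mathcal{L}}$ is a concrete combination of complete-elliptic-integral-type periods of $Gdh$ and $dh/G$ on the elliptic curves underlying the $\oD$ family; I expect it must be controlled either by an exact evaluation together with a monotonicity estimate, or by an asymptotic analysis at a degeneration of the $\oD$ family --- a catenoidal neck opening up, say --- where the period and its $t$-derivative are computable, followed by a matching argument to the opposite end of the family. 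Establishing this sign change, ruling out the bifurcating branch from re-entering $\mathcal{L}$, and propagating embeddedness along the full continuation, are where the real effort lies.
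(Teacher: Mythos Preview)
Your bifurcation strategy is a reasonable line of attack, but it is \emph{not} the route the paper takes, and as written it remains an outline with an acknowledged gap at the decisive step.

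The paper's argument is direct and global. After the same reduction to a single period equation $Q(a,b;t)=0$ on the domain $1<a\le b<t$ (with $\oD=\{a=b\}$), it fixes an \emph{arbitrary} pair $a<b$ and shows, by explicit asymptotic analysis of the Schwarz--Christoffel period integrals, that
\[
\lim_{t\to b+} Q(a,b;t)>0,\qquad \lim_{t\to+\infty} Q(a,b;t)=-\infty,
\]
so the intermediate value theorem yields a $t$ with $Q=0$. This produces a solution for \emph{every} $(a,b)$ with $a\ne b$ at once, hence a full $2$-parameter family, with no need to start near $\oD$ and continue outward.

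Your approach instead would (i) locate a zero of the transverse derivative $\partial_t\mathcal P|_{\mathcal L}$ on $\oD$, (ii) invoke bifurcation to get a branch near that zero, and (iii) continue the branch and propagate embeddedness. You explicitly leave (i) open (``I expect it must be controlled\ldots''), and even granting it you would still owe a transversality condition to ensure $\{Q=0\}$ actually leaves $\mathcal L$ as a smooth hypersurface; ``$Q$ vanishes somewhere on $\mathcal L$ but not identically'' is not sufficient for that. Steps (ii)--(iii) are only sketched. So the proposal has a real gap precisely where you flag it, and a smaller unacknowledged one at the transversality step.

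It is worth noting that the paper \emph{does} evaluate your $Q|_{\mathcal L}$ --- i.e.\ $\lim_{b\to a}Q(a,b;t)/(b-a)$ --- in closed form via complete elliptic integrals, but in a later section and for a different purpose: locating the curve $\overline{\oDD}\cap\oD$ after existence is already established. Your instinct about the relevance of that quantity is correct; the paper simply arranges the logic so that existence comes first, by the cheaper intermediate-value argument, and the elliptic-integral computation is used afterwards to pin down where the two families meet.
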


The $\oDD$ family, well hidden in the radiance of the famous $\oD$ family, is
understandably unexpected.  The second author confesses his complete bafflement
and initial disbelief when the first author provided him with evidence of
$\oDD$.  In Figure \ref{fig:Compare} we compare $\oD$ and $\oDD$ surfaces with
the same lattice (the surfaces in this figure actually have tetragonal
lattices, hence belong to $\tD$ and $\tDD$ subfamilies that we will discuss in
Section~\ref{sec:tD}.)

\begin{figure}[ht!]
  \begin{center}
		\captionsetup[subfigure]{labelformat=empty}
    \subfloat{
      \includegraphics[width=0.4\textwidth]{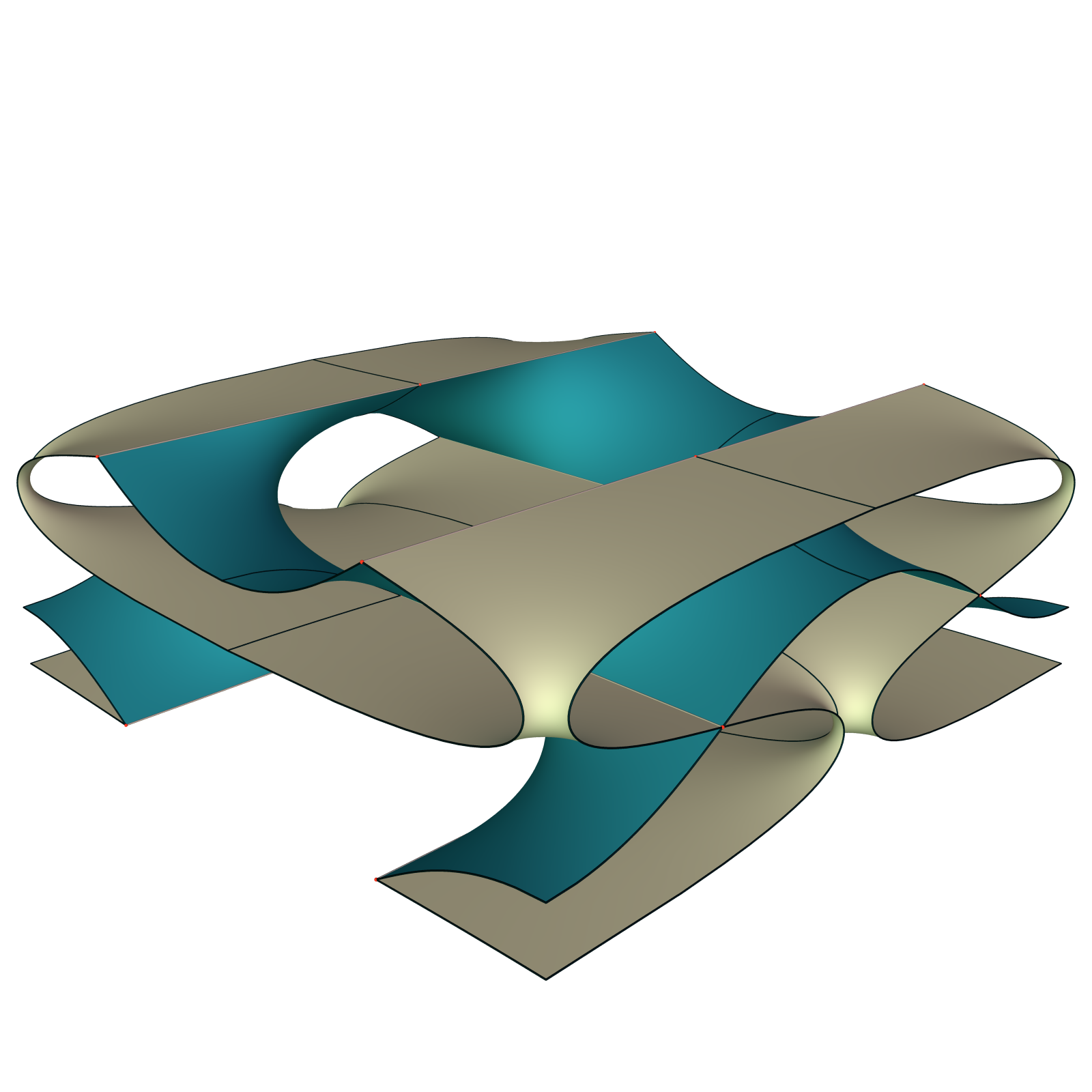}
    }
    \subfloat{
      \includegraphics[width=0.4\textwidth]{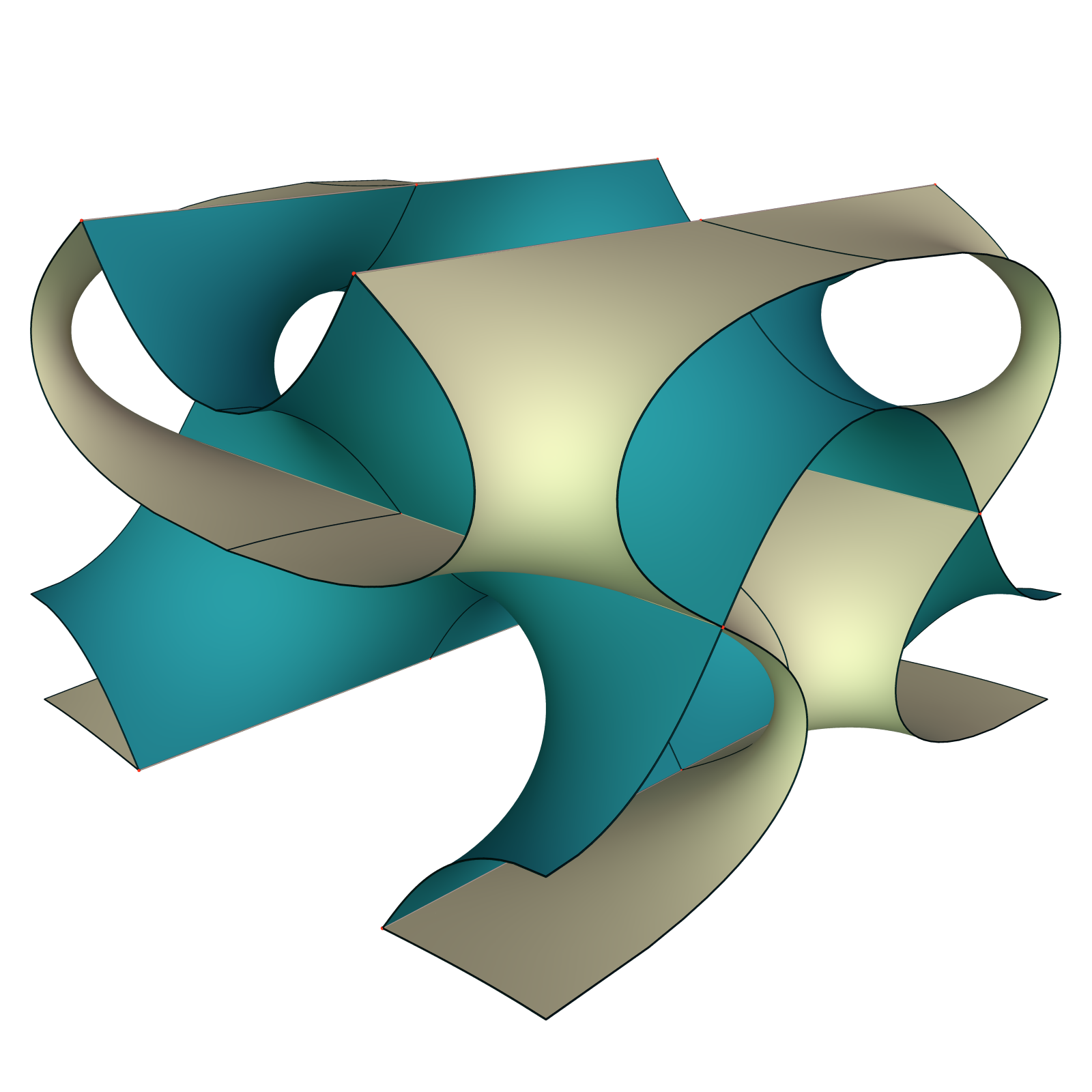}
    }\\
    \subfloat{
      \includegraphics[width=0.4\textwidth]{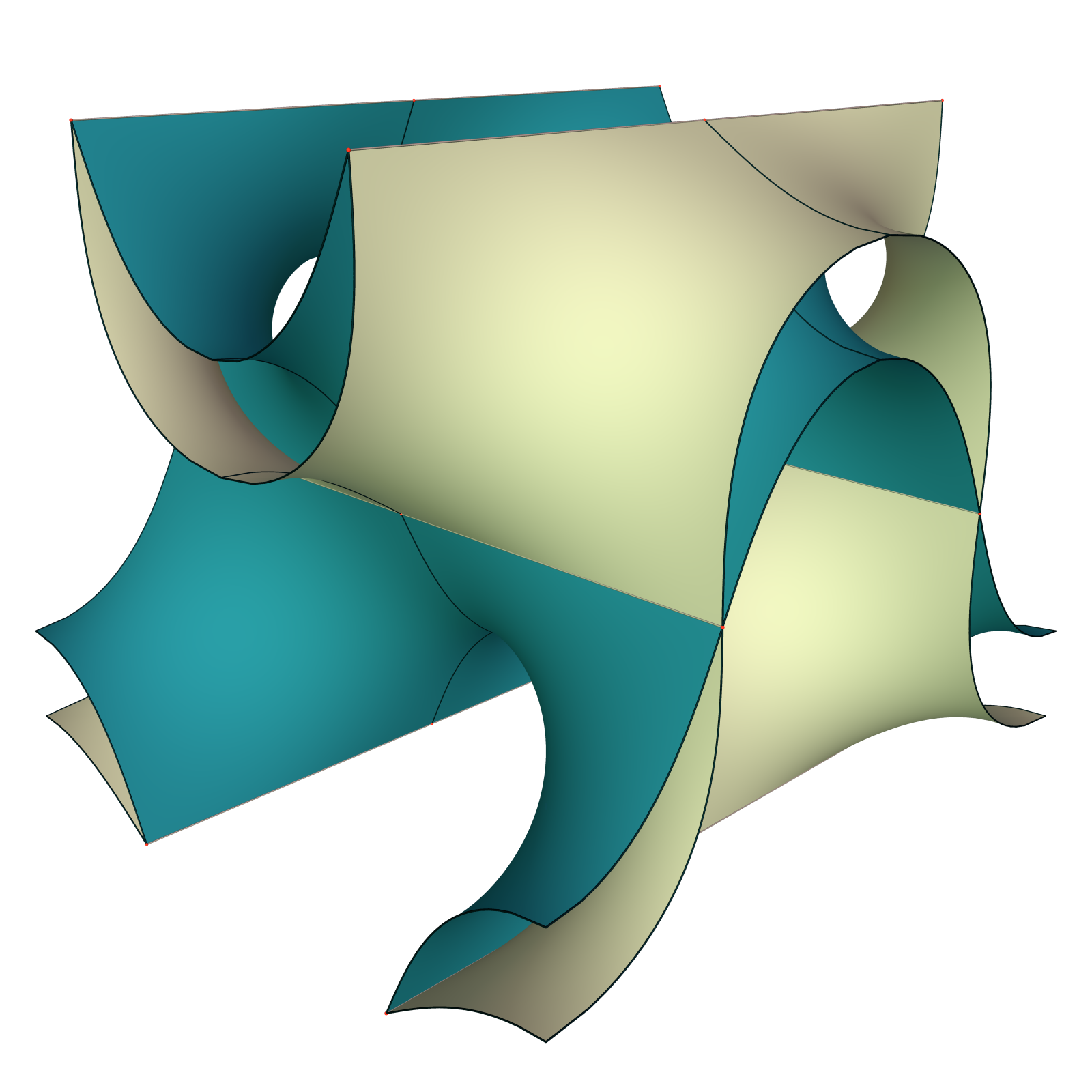}
    }
    \subfloat{
      \includegraphics[width=0.4\textwidth]{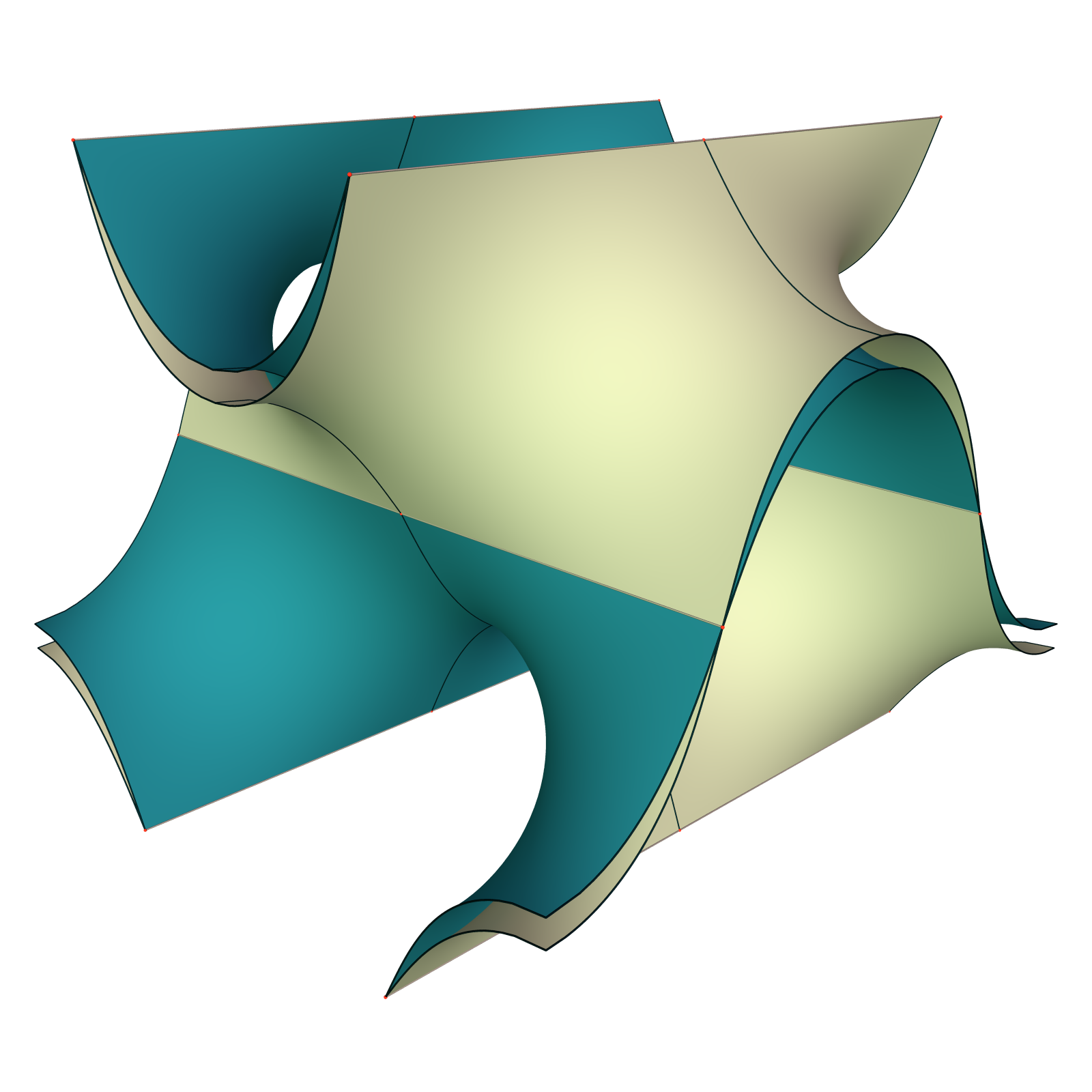}
    }
  \end{center}
  \caption{
    Comparison of $\oD$ (dark) and $\oDD$ (bright) surfaces with the same
    lattice.  These surfaces actually belong to the $\tD$ and $\tDD$ subfamily.
  }
  \label{fig:Compare}
\end{figure}

The $\oDD$ family is not merely a surprise.  Its significance is revealed in
the following proposition.

\begin{proposition}\label{prop:meeks}
	The surfaces in $\oDD$ do not belong to the Meeks family.  That is, the
	branched values of the Gauss map of an $\oDD$ surface do not form four
	antipodal pairs.  In fact, the only Meeks surfaces in $\cD$ are the $\oD$
	surface.  However, the closure $\overline\oDD$ intersects $\oD$ in a
	1-parameter family of TPMS.
\end{proposition}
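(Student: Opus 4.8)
\emph{Plan.} All three assertions can be read off the Weierstrass representation of surfaces in $\cD$ obtained in the proof of Theorem~\ref{thm:main}. A genus-$3$ triply periodic minimal surface has total curvature $-8\pi$, so its Gauss map $g$ has degree $2$; hence the underlying Riemann surface $M$ is hyperelliptic and $g$ is its (unique) degree-$2$ map. Taking the hyperelliptic coordinate to be $g$ itself, and noting that each reflection in a lateral face of the box and each rotation about one of the two horizontal segments acts on the target $\overline{\C}$ of $g$ as one of the involutions $z\mapsto\pm\bar z$, one gets
\[
	M:\quad w^2 = z^8 + c_3 z^6 + c_2 z^4 + c_1 z^2 + c_0,\qquad c_0,c_1,c_2,c_3\in\R,
\]
with $g=z$ and $dh$ a height differential determined, up to the discrete data of the construction, by the same symmetries. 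By Riemann--Hurwitz the branch points of $g$ are exactly the eight roots of this octic, and as a subset of $\overline{\C}$ they are invariant under the Klein four-group generated by $z\mapsto -z$ and $z\mapsto\bar z$.

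\emph{The first two claims.} The antipode of $z$ on the Gauss sphere is $-1/\bar z$, so the branch values of $g$ form four antipodal pairs if and only if the root set of the octic is, in addition, invariant under $z\mapsto -1/\bar z$; combined with the symmetries already present, this is equivalent to invariance of the root set under $z\mapsto 1/z$. I would then show that, for a surface in $\cD$, this last condition holds precisely when the surface is an $\oD$ surface. The easy direction is that the $180^\circ$ rotation about the vertical straight line of an $\oD$ surface acts on $g$ by $g\mapsto 1/\bar g$, making the root set invariant under $z\mapsto 1/z$. For the converse one lifts the curve involution $z\mapsto 1/\bar z$ to an ambient isometry --- this requires checking compatibility with the height differential coming from the $\cD$-construction --- and then identifies the image of its fixed set $\{|z|=1\}$: there the surface has horizontal Gauss image, and since $dh$ is real along it (a discrete invariant, pinned down by continuity from the D surface) this fixed set maps to a vertical straight segment, so the surface is an $\oD$ surface. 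This identifies the Meeks surfaces in $\cD$ with the $\oD$ surfaces. Finally, since Theorem~\ref{thm:main} asserts that the $\oDD$ surfaces have no vertical straight line, their octics fail the $z\mapsto 1/z$ symmetry, so their Gauss branch values do not form four antipodal pairs and they are not Meeks surfaces.

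\emph{The last claim.} After the reduction above, the $\cD$ period problem is a system of real-analytic equations in $(c_0,c_1,c_2,c_3)$ together with the remaining scale of $dh$; the $\oD$ family is its solution set on the palindromic (i.e.\ $z\mapsto 1/z$-symmetric) slice, while Theorem~\ref{thm:main} produces $\oDD$ as a second, two-dimensional solution family disjoint from that slice. The plan is: (i) locate the portion of the boundary $\partial\oDD$ along which the failure of palindromicity tends to $0$; (ii) prove that the Weierstrass data converge there to \emph{nondegenerate} palindromic data --- no two roots of the octic collide and the period map stays nondegenerate --- so that the limit is a genuine smooth, embedded $\oD$ surface and not a degenerate limit; (iii) conclude that this boundary locus is a real-analytic arc, one-dimensional because it is cut out inside the two-dimensional family $\oDD$ by the single extra condition of palindromicity, and that it is all of $\oD\cap\overline{\oDD}$, since any point of the latter is a limit of non-palindromic $\cD$-data tending to palindromic data.

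\emph{Main obstacle.} The first two claims are essentially bookkeeping once the Weierstrass data of Theorem~\ref{thm:main} are available, with some care needed to see that the lifted involution's fixed locus is the vertical straight line and not a horizontal mirror plane. The hard part will be step (ii) of the last claim: controlling the period map near $\partial\oDD$ well enough to guarantee that the $\oDD$ data degenerate precisely to \emph{nondegenerate} $\oD$ data. I expect this to demand a quantitative refinement, now along the boundary of the parameter domain, of the nondegeneracy estimates underlying the proof of Theorem~\ref{thm:main}.
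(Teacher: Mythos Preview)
Your approach diverges from the paper's on all three claims, and in ways that matter.

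\textbf{First two claims.} The paper works in the Schwarz--Christoffel parametrization of Section~\ref{sec:period}: the Gauss map is $G(z)=i\rho\sqrt{(z-b)/(z+a)}$, its branch points are the hexagon corners $z=\pm1,\pm t$, and one simply writes down the eight branched values $\sigma(\pm G(\pm1))$, $\sigma(\pm G(\pm t))$ on $S^2$. Antipodality of the four real ones forces $\rho^4=(a^2-1)/(b^2-1)$; antipodality of the four imaginary ones forces $\rho^4=(t^2-a^2)/(t^2-b^2)$; eliminating $\rho$ gives $a=b$ by elementary algebra. No period condition, no lifting of involutions, no identification of fixed loci. Your route---reparametrize by $g$, recognise antipodality as palindromicity of the branch octic, then lift $z\mapsto 1/\bar z$ to an ambient isometry and read off the vertical line---is conceptually appealing but \emph{not} bookkeeping. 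The hyperelliptic curve certainly acquires an anti-holomorphic involution $\tau$ covering $z\mapsto 1/\bar z$ once the octic is palindromic, but to promote $\tau$ to an isometry of the minimal surface you must verify $\tau^*dh=\overline{dh}$, and for that you need to know what $dh$ is in your octic coordinates. At that point you are essentially back to the paper's parameters, and the direct computation is both shorter and stronger (it never uses the period equations, so it shows antipodal $\Rightarrow a=b$ for \emph{all} $\cD$-type Weierstrass data, closed or not). Your remark that ``some care is needed to see the fixed locus is the vertical line and not a horizontal mirror plane'' is an understatement: this is exactly where the content lives, and you have not supplied it.

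\textbf{Last claim.} Here the gap is larger. The paper does not argue abstractly about boundaries of parameter domains; it computes. Writing $\tilde Q(a,b;t)=Q(a,b;t)/(b-a)$ and passing to the limit $b\to a$ (equivalently, taking $\partial_bQ|_{a=b}$), every period integral and its $b$-derivative is evaluated in closed form via complete elliptic integrals $K,E,\bar K,\bar E$ with moduli $m_1=(a^2-1)/(t^2-1)$ and $m_2=(t^2/a^2)m_1$. The upshot is the explicit equation
\[
\bar K(m_1)E(m_2)+\bar E(m_1)K(m_2)=\bar K(m_1)K(m_2)
\]
for $\overline{\oDD}\cap\oD$, from which the one-dimensionality and nondegeneracy are read off directly. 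Your steps (i)--(iii) are a reasonable \emph{outline} of what such a computation accomplishes, but they are not a proof: your step~(ii) (``the period map stays nondegenerate'') is precisely the statement $\tilde Q(a,a;t)$ has a nontrivial zero locus, and you give no mechanism for establishing it short of the explicit evaluation. Your dimension count in step~(iii) is also circular---$\oDD$ is by definition the non-palindromic locus, so ``cutting $\oDD$ by palindromicity'' is not how the boundary arises; rather, one must show that $\tilde Q$ extends analytically across $a=b$ and changes sign there, which again is what the elliptic-integral formula delivers.
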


\medskip

We now provide some context for the proposition.

For the purpose of this paper, a TPMS is a complete, embedded minimal surface
$\tilde \Sigma$ in Euclidean space $\R^3$ invariant under a lattice $\Lambda$
of Euclidean translations. The quotient $\Sigma = \tilde \Sigma /\Lambda$ then is
a compact Riemann surface in the 3-torus $\R^3 / \Lambda$.  The lowest possible
genus for a non-trivial TPMS is 3.  In this case, the Gauss map of $\Sigma$ has
degree 2, and the surface is therefore necessarily hyperelliptic.

The first examples of TMPS were given by H.\ A.\ Schwarz \cite{schwarz1890}
around 1867, with explicit Weierstrass data for very symmetric cases.  Schwarz
understood that the eight branched values of the Gauss map play a crucial role.
More generally, W.\ Meeks III \cite{meeks1990} explicitly constructed a family
$\cM$ of TPMS of genus 3.  He showed that if eight points on the sphere come in
four antipodal pairs, then they are the branched values of the Gauss map for
two conjugate TPMS of genus 3. The Meeks family $\cM$, considered up to
congruence and dilation, is a connected, smooth, (real) 5-dimensional manifold,
and includes almost all previously known examples.

Famous exceptions are the H surfaces of Schwarz, for which the branched values
are placed at the north pole, the south pole, and the vertices of a prism over
an equilateral triangle inside the sphere; and the Gyroid of A.\ Schoen
\cite{schoen1970}, whose Gauss map has the same branched values as those of
Schwarz' P and D surfaces, but does not belong to the Meeks family.  We use
$\cN$ to denote the complement of $\cM$ in the set of all TPMS of genus 3.
Since then, more examples in $\cN$ have been found, either as isolated examples
or as 1-parameter families, and some of them only numerically
\cite{fogden1993,fogden1999, weyhaupt2006,weyhaupt2008}.  Our 2-parameter
family $\oDD$ is therefore an important step towards the understanding of
non-Meeks TPMS of genus 3.

\medskip

Meeks' result is extended into the following rigidity statement:  In the
neighborhood of a non-degenerate TPMS, there is a bijection between TPMS and
lattices in $\R^3$; see \cite{koiso2014} for instance.  Hence up to congruence
and dilation, a non-degenerate TPMS belongs (locally) to a 5-parameter family.
Besides that, very little is known about the structure of $\cN$.  We would like
to conjecture that $\cN$ is, like $\cM$, connected and smooth, but none of
these is known.

There is evidence~\cite{fogden1993,fogden1999, weyhaupt2006, weyhaupt2008} that
$\cM$ and the closure $\overline{\cN}$ have non-empty intersection.
Proposition~\ref{prop:meeks} provides the first concrete example of such
intersection in the form of a 1-dimensional family of TPMS.  This is of
considerable importance for stability questions of TPMS.

A TPMS of genus 3 is called a \emph{bifurcation instance} if there are
non-congruent deformations (bifurcation branches) of the TPMS with the same
deformation of the lattice.  Koiso, Piccione and Shoda~\cite{koiso2014}
identified isolated \emph{bifurcation instances} among classical deformations
of TPMS; see also~\cite{ejiri2014,ejiri2018}.  They found bifurcation branches
for most of these bifurcation instances.  But for three ``exotic'' bifurcation
instances, they only suggested that a bifurcation branch from them would not be
a ``classical'' TPMS.

The intersection $\overline\oDD \cap \oD$ is a 1-parameter family of
bifurcation instances.  In particular, a 1-parameter subfamily of $\oDD$, which
we call $\tDD$, has the same tetragonal lattices as the $\tD$ family.  The
intersection $\overline\tDD \cap \tD$ contains a single TPMS, denoted by
$\tD^*$, which turns out to be one of the exotic bifurcation instances
in~\cite{koiso2014}.  We also find a bifurcation branch $\tPP$ from the
conjugate of $\tD^*$, another exotic bifurcation instance in Schwarz' $\tP$
family.  But $\tPP$ is not the focus of this paper, since it is nothing but a
classical $\oP a$ deformation~\cite{fischer1989,fogden1992}.

\medskip

For sufficiently large $A$ and $B$, the existence of $\oDD$ surfaces is implied
by results of Traizet~\cite{traizet2008}, who constructed TPMS by opening
catenoidal nodes among 2-tori.  The positions of the nodes have to satisfy a
balance condition, formulated in terms of elliptic functions, and a
non-degeneracy condition.  The Traizet limit of $\oDD$ was noted by the first
author in an earlier experimental work~\cite{chen2018}.  He used Brakke's
Surface Evolver~\cite{brakke1992} to numerically deform the TPMS from near the
Traizet limit up to Schwarz' $\tD$ family, and obtained the first images of
$\tDD$.  In particular, he observed that $\tDD$ eventually intersects $\tD$,
but Surface Evolver fails to converge near the intersection.  This failure can
now be explained by numeric bifurcation.

\medskip

Our paper is organized as follows: 

In Section \ref{sec:period}, we describe the Weierstrass data for surfaces in
$\cD$, prove their embeddedness, and formulate the period problem, depending on
three real positive parameters $a, b$ and $t$. The case $a=b$ corresponds to
the $\oD$ surfaces, where the period problem is automatically solved.  In the
case $a\ne b$, the period problem becomes 1-dimensional but is rather
complicated.

In Section \ref{sec:gauss} we show that, if $a \ne b$, the branched values of
the Gauss map can \emph{not} be antipodal.  This proves that $\cD \cap \cM =
\oD$, and that any solution with $a \ne b$ (namely $\oDD$) lies in $\cN$.

Section \ref{sec:exist} is dedicated to the existence proof of $\oDD$.  We show
that for any choice of $a \ne b$, there is a value of $t$ that solves the
period problem.  This is accomplished through a careful asymptotic analysis of
the period integrals.  We also conjecture the uniqueness of $t$ based on
numerical experiments.

To prove that $\oD \subset \cM$ and the closure of $\oDD \subset \cN$ have a
non-empty intersection, we consider in Section \ref{sec:elliptic} a modified
period problem that eliminates the trivial solutions coming from $\oD$. It
turns out that this period problem can be solved explicitly in terms of
elliptic integrals.

In section \ref{sec:tD} we consider the surfaces with tetragonal lattices.  They
are $\cD$ surfaces whose parameters satisfy $ab=t$. In this case, we obtain two
1-parameter families of surfaces: $\tD \subset \oD$ containing Schwarz' D
surface, and $\tDD \subset \oDD$.  The intersection $\overline \tDD \cap \tD$
contains a single TPMS $\tD^*$.  As the existence of $\tDD$ does not follow
from Section \ref{sec:exist}, we give an independent proof for this case using
an extremal length argument.

\subsection*{Acknowledgements}

The first author thanks his newborn daughter for keeping him awake through the
nights, which helped noticing the $\tDD$ family.

The second author thanks his teenage daughter for keeping him sleepless as
well, thus providing time to work on this paper.

We are grateful to the anonymous referee for suggestions and corrections after
carefully reading a previous version of the manuscript.

\section{Weierstrass Data and Period Problem}
\label{sec:period}

We parameterise a surface in $\cD$ with a Weierstrass representation defined on
the upper half plane such that the real axis is mapped to the boundary of the
hexagon $S$.  Let the vertices of $S$ be labeled by $V_1, V_2, \cdots, V_6$ as
in Figure~\ref{fig:fundpieces} (left).  Denote the preimage of $V_k$ by $v_k
\in \R$, and assume that $v_1<v_2<\ldots<v_6$. 

\begin{figure}[hbt]
	\includegraphics[width=0.8\textwidth]{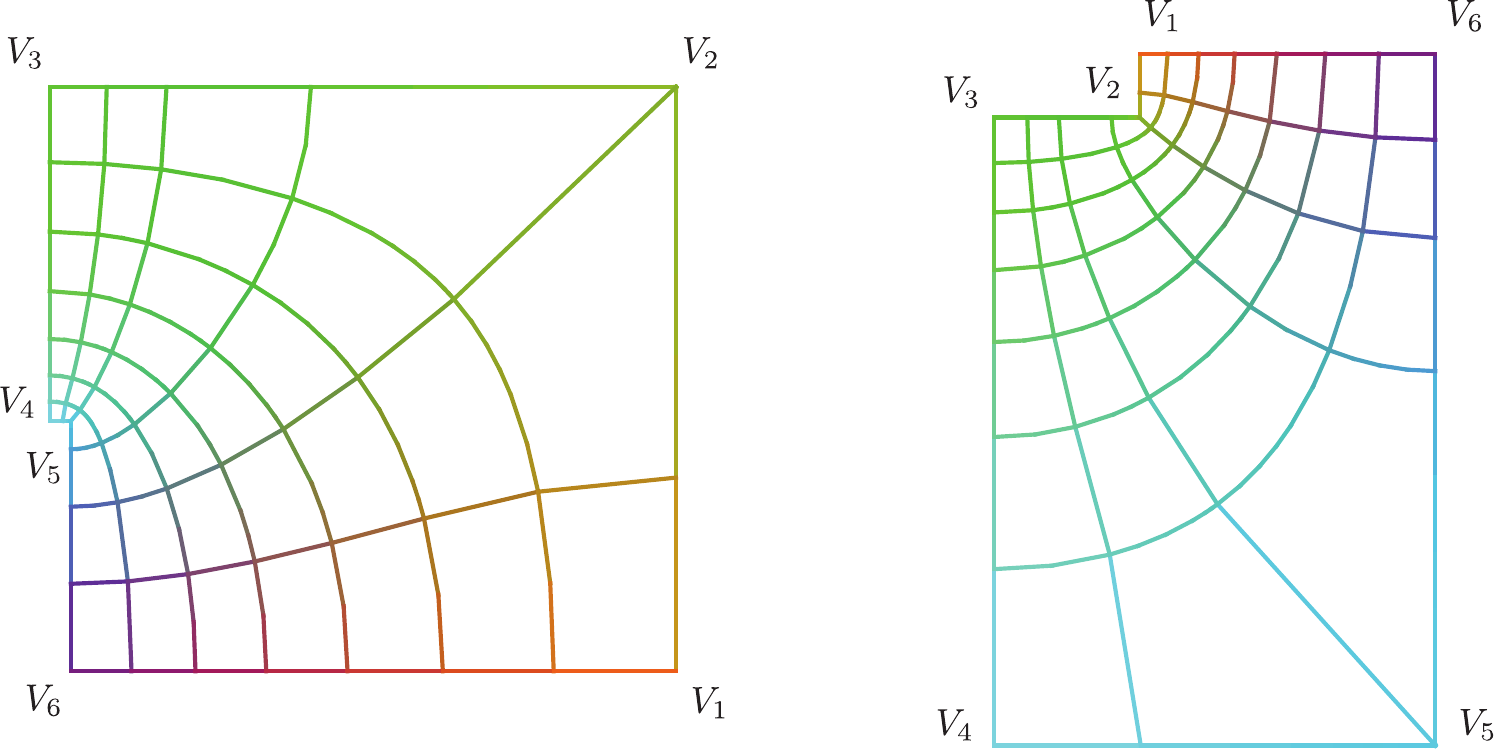}
	\caption{Images of a fundamental piece under $\Phi_1$ and $\Phi_2$.}
	\label{fig:phidomains}
\end{figure}

Given a $\cD$ surface, denote by $dh$ its height differential and by $G$ its
Gauss map. Let $\phi_1:=dh \cdot G$ and $\phi_2:=dh / G$.  The assumed boundary
symmetries of the surface imply that $\Phi_j: z \mapsto \int^z \phi_j$ ($j=1$
or $2$) map the upper half plane to ``right angled'' Euclidean hexagons.  The
interior angle is $270^\circ$ at $\Phi_1(v_5)$ and $\Phi_2(v_2)$.  Indeed, the
Gauss map is vertical at $V_2$ and $V_5$, hence $v_2$ and $v_5$ are
respectively the pole and the zero of $G$.  Interior angles at all other
vertices are $90^\circ$; see Figure \ref{fig:phidomains}.

Such maps are given by Schwarz-Christoffel maps.  More specifically, we have
\begin{align*} 
  \phi_1 :=   \rho\, & (z-v_1)^{-1/2} (z-v_2)^{-1/2} (z-v_3)^{-1/2} (z-v_4)^{-1/2} (z-v_5)^{+1/2} (z-v_6)^{-1/2}\, dz,\\
  \phi_2 := -\frac{1}{\rho} & (z-v_1)^{-1/2} (z-v_2)^{+1/2} (z-v_3)^{-1/2} (z-v_4)^{-1/2} (z-v_5)^{-1/2} (z-v_6)^{-1/2}\, dz,\\
  dh := -i\, & (z-v_1)^{-1/2} \makebox[0pt][l]{$\qquad\times$}\phantom{(z-v_2)^{-1/2}} (z-v_3)^{-1/2} (z-v_4)^{-1/2} \makebox[0pt][l]{$\qquad\times$}\phantom{(z-v_5)^{-1/2}} (z-v_6)^{-1/2}\, dz.
\end{align*}
Here, the real positive Lop{\'e}z-Ros factor $\rho$ determines scaling of the
image domains.  The Gauss map is $G:=i\rho(z-v_2)^{-1/2} (z-v_5)^{+1/2}$.

\begin{proposition}
	Up to congruence and dilation, the image of the upper half plane under the
	map
	\begin{equation} \label{eq:weierstrass}
		z \mapsto \re \int^z (\omega_1, \omega_2, \omega_3) = \re \int^z \left(
		\frac{1}{2}(\phi_2-\phi_1), \frac{i}{2}(\phi_2+\phi_1), dh \right)
	\end{equation}
	is \emph{almost} the fundamental hexagon of a $\cD$ surface in the following
	sense:  The intervals $v_1v_2$, $v_2v_3$, $v_4v_5$ and $v_5v_6$ are mapped to
	planar symmetry curves in the lateral faces of an axis parallel box.  The
	intervals $v_6v_1$ and $v_3v_4$ are mapped, respectively, to straight
	segments parallel to the $x$ and $y$ axis, but not necessarily in the middle,
	in the bottom and top faces of the box.
\end{proposition}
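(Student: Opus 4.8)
\medskip
\noindent The plan is to read off the geometry of each boundary arc directly from the Weierstrass data via the Schwarz reflection principle. Recall the standard dichotomy for a conformal minimal immersion $X=\re\int(\omega_1,\omega_2,\omega_3)$ of the upper half plane: along a boundary interval $I\subset\R$ on which exactly one of the $\omega_j$ is purely imaginary and the other two are real, $X(I)$ is a planar symmetry curve contained in a plane $\{x_j=\mathrm{const}\}$ which $X$ meets orthogonally (and $X$ extends by reflection in that plane); dually, along an interval on which one $\omega_j$ is real and the other two are purely imaginary, $X(I)$ is a straight segment parallel to the $x_j$-axis (and $X$ extends by the half-turn about that line). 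Thus the whole statement reduces to deciding, on each of the six boundary intervals $v_1v_2,v_2v_3,v_3v_4,v_4v_5,v_5v_6,v_6v_1$, which of $\omega_1,\omega_2,\omega_3$ is real and which is purely imaginary.

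To do this I would fix the branch $\arg(z-v_k)\in(0,\pi)$ on the upper half plane, so that on $\R$ a factor $(x-v_k)^{\pm1/2}$ is a positive real number for $x>v_k$ and $\pm i$ times a positive real number for $x<v_k$. The computation is made painless by the observation that $G^2=-\rho^2(z-v_5)/(z-v_2)$ and $dh^2=-[(z-v_1)(z-v_3)(z-v_4)(z-v_6)]^{-1}\,dz^2$ are real on the real axis; hence both $G$ and $dh$ are, on each boundary interval, either real or purely imaginary, and therefore so is $\phi_1=G\,dh$, while $\phi_2=dh/G=\phi_1/G^2$ is of the \emph{same} type (real/imaginary) as $\phi_1$. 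It follows at once that $\omega_1=\tfrac12(\phi_2-\phi_1)$ and $\omega_2=\tfrac i2(\phi_2+\phi_1)$ are again purely real or purely imaginary on each interval --- which is exactly why the image is a right-angled hexagon. A short interval-by-interval count of the $v_k$ lying to the right of a point now yields: $G$ is purely imaginary on $v_6v_1$, $v_1v_2$, $v_5v_6$ and real on $v_2v_3$, $v_3v_4$, $v_4v_5$; $dh$ is purely imaginary on $v_6v_1$ and $v_3v_4$ and real on the other four arcs; hence $\phi_1,\phi_2$ are real on $v_6v_1$, $v_2v_3$, $v_4v_5$ and purely imaginary on $v_1v_2$, $v_3v_4$, $v_5v_6$; and therefore $(\omega_1,\omega_2,\omega_3)$ is of type $(\mathrm{re},\mathrm{im},\mathrm{im})$ on $v_6v_1$, of type $(\mathrm{im},\mathrm{re},\mathrm{re})$ on $v_1v_2$ and $v_5v_6$, of type $(\mathrm{re},\mathrm{im},\mathrm{re})$ on $v_2v_3$ and $v_4v_5$, and of type $(\mathrm{im},\mathrm{re},\mathrm{im})$ on $v_3v_4$.

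Reading this table through the dichotomy gives the proposition: $v_1v_2$ and $v_5v_6$ map to planar symmetry curves lying in two vertical planes $\{x=\mathrm{const}\}$, while $v_2v_3$ and $v_4v_5$ map to planar symmetry curves in two vertical planes $\{y=\mathrm{const}\}$; these four planes are the lateral faces of an axis-parallel box. The arc $v_6v_1$ maps to a straight segment parallel to the $x$-axis contained in a plane $\{z=\mathrm{const}\}$, and $v_3v_4$ maps to a straight segment parallel to the $y$-axis in another plane $\{z=\mathrm{const}\}$; these are the top and bottom faces of the box. Nothing in the computation ties the $y$-level of the $x$-parallel segment, or the $x$-level of the $y$-parallel segment, to the box, so these segments need not pass through the middle of the faces they lie on; forcing them to do so is precisely the period condition that has not yet been imposed --- hence ``almost''. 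That the six arcs actually bound an embedded right-angled hexagon inscribed in the box would follow, as usual, from the Schwarz--Christoffel description of $\Phi_1$ and $\Phi_2$ as embeddings of the upper half plane onto right-angled hexagons, together with the identity $X=\bigl(\tfrac12(\re\Phi_2-\re\Phi_1),\,-\tfrac12\im(\Phi_1+\Phi_2),\,\re\!\int dh\bigr)$.

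The computation is routine; the only places demanding care are the branch bookkeeping --- especially on the arc $v_6v_1$, which passes through $\infty$ and has to be treated as $(v_6,+\infty)\cup(-\infty,v_1)$ --- and the check that $\phi_1$ and $\phi_2$ never have opposite types, which I would get cleanly from the reality of $G^2$ and $dh^2$ rather than by case analysis. The only genuinely non-computational point is the embeddedness of the image hexagon, which is the soft part of the argument and is what the appeal to the Schwarz--Christoffel structure is for.
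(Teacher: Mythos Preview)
Your argument is correct and follows essentially the same route as the paper: determine, interval by interval, which of $\omega_1,\omega_2,\omega_3$ are real and which are purely imaginary on the real axis, and then invoke the Schwarz reflection principle to read off whether each boundary arc is a planar symmetry curve or a straight segment. The paper carries this out explicitly only for $v_6v_1$ and then says ``similar analysis on the other edges''; your use of the reality of $G^2$ and $dh^2$ to see at once that $\phi_1$ and $\phi_2$ always share the same real/imaginary type is a tidy way of organizing the same computation, but not a different idea.
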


\begin{proof}
	Note that the integrand in $\phi_1$ (resp.\ $\phi_2$) is real positive
	(resp.\ negative) for $z>v_6$. This implies that the image of the segment
	$v_6v_1$ under the Schwarz-Christoffel map $\Phi_1$ (resp.\ $\Phi_2$) is
	horizontal rightward (resp.\ leftward), as in Figure \ref{fig:phidomains}.

	The Schwarz-Christoffel maps $\Phi_j$ and $z \mapsto \int^z dh$ can be
	continued by reflection across any edge to the lower half plane, inducing
	symmetries of the minimal surface.  We now determine what kind of symmetry is
	induced on each edge.
	
	For that, we only carry out a detailed analysis on the edge $v_6v_1$.  The
	integrands in both $\phi_j$ are real on $v_6v_1$, hence their continuations
	across this edge are given by $\overline{\phi_j(\overline{z})}$.  Meanwhile,
	the integrand in $dh$ is imaginary on $v_6v_1$, so its continuation is given
	by $-\overline{dh(\overline{z})}$.  Therefore, after crossing $v_6v_1$,
	$\re\omega_1$ remains unchanged while $\re\omega_2$ and $\re\omega_3$ change
	sign.  This means that the surface is extended by a rotation about a straight
	line parallel to the $x$-axis.
	
	Similar analysis on the other edges then prove that the image of the upper
	half plane under~\eqref{eq:weierstrass} has the claimed boundary curves.
	Note that the surface obtained is free of singularities.  Indeed, the metric
	is regular away from $v_k$, and the exponents at $v_k$ guarantee a smooth
	extension.
\end{proof}

\medskip

We now study the condition for the two horizontal segments to lie in the middle
of the top and the bottom faces of the box.  To this end, we introduce
notations for the edge lengths of the Euclidean hexagons 
\[
	I_k := \left| \int_{v_k}^{v_{k+1}} \phi_1 \right|, \qquad
	J_k := \left| \int_{v_k}^{v_{k+1}} \phi_2 \right|
\]
for $1 \le k \le 5$.  These are positive real numbers that depend analytically
on the parameters $v_1, \ldots, v_6$ and $\rho$.

\begin{proposition}
	The image of the upper half plane under the Weierstrass
	representation~\eqref{eq:weierstrass} is the fundamental hexagon of a surface
	in $\cD$ if and only if the following period conditions are satisfied:
	\begin{equation}
		\begin{aligned} \label{eq:periodcond}
			I_1+I_5 = J_1+J_5 \\
			I_2+I_4 = J_2+J_4
		\end{aligned}
	\end{equation}
\end{proposition}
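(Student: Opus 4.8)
The plan is to show that the period conditions in \eqref{eq:periodcond} are exactly the algebraic form of the requirement — implicit in the rotation-and-reflection extension described in the introduction — that the two straight boundary segments lie \emph{in the middle} of the bottom and top faces of the box.

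I would start from the previous proposition, which already presents the image of \eqref{eq:weierstrass} as an embedded minimal hexagon inside an axis-parallel box whose edges $v_1v_2,v_2v_3,v_4v_5,v_5v_6$ are planar symmetry curves, one in each of the four lateral faces, while $v_6v_1$ (resp.\ $v_3v_4$) is a straight segment parallel to the $x$-axis (resp.\ $y$-axis) lying in the bottom (resp.\ top) face. The endpoints of $v_6v_1$ share that edge's constant $y$- and $z$-coordinates, so they lie on the two faces perpendicular to the $x$-axis rather than on $y=\pm B$; hence $v_6v_1$ already spans the full $x$-width of the box, and symmetrically $v_3v_4$ spans the full $y$-width at the top. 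Therefore the hexagon is the fundamental piece of a surface in $\cD$ if and only if the $y$-coordinate of $v_6v_1$ is the midpoint of the faces $y=\pm B$ and the $x$-coordinate of $v_3v_4$ is the midpoint of $x=\pm A$; this accounts for both directions of the ``if and only if'', and it remains to turn these two centering conditions into relations among the $I_k,J_k$.

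For that I would exploit the right-angled hexagons of Figure~\ref{fig:phidomains}. The positions of the $270^\circ$ angles — at $\Phi_1(v_5)$ and $\Phi_2(v_2)$ — together with the known direction of the image of $v_6v_1$ (rightward under $\Phi_1$, leftward under $\Phi_2$) determine, by traversing the boundary, the direction of every edge of both hexagons; thus $\int_{v_k}^{v_{k+1}}\phi_1=I_k e^{i\theta_k}$ and $\int_{v_k}^{v_{k+1}}\phi_2=J_k e^{i\psi_k}$ with $\theta_k,\psi_k$ explicit integer multiples of $\pi/2$. Substituting into $\omega_1=\tfrac12(\phi_2-\phi_1)$ and $\omega_2=\tfrac i2(\phi_2+\phi_1)$ expresses the coordinate increments $\Delta x_k=\re\int_{v_k}^{v_{k+1}}\omega_1$ and $\Delta y_k=\re\int_{v_k}^{v_{k+1}}\omega_2$ of the minimal-surface hexagon as explicit signed combinations of $I_k$ and $J_k$; in particular $\Delta y$ vanishes on $v_2v_3,v_4v_5,v_6v_1$ and $\Delta x$ vanishes on $v_1v_2,v_3v_4,v_5v_6$. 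Since $\Delta y$ vanishes on $v_6v_1$ and on the two edges adjacent to $v_3v_4$, the location of $v_6v_1$ relative to the faces $y=\pm B$ is governed entirely by $\Delta y$ along $v_1v_2$ and $v_5v_6$, i.e.\ by $I_1,J_1,I_5,J_5$, and the midpoint condition becomes $I_1+I_5=J_1+J_5$; the mirror computation with the increments $\Delta x_k$ gives $I_2+I_4=J_2+J_4$ as the centering condition for $v_3v_4$. (The same bookkeeping incidentally re-derives $I_1=I_3+I_5$ and $J_5=J_1+J_3$, the relations forced by closing up the two Schwarz-Christoffel hexagons; these hold automatically and add nothing, but serve as a consistency check.)

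The one genuinely delicate part is the bookkeeping in the third paragraph: matching the four symmetry-curve edges with the four lateral faces and tracking all the signs in passing from $\Phi_1,\Phi_2$-data to $\Delta x_k,\Delta y_k$. I expect this, rather than any conceptual step, to be the main obstacle; everything else is read directly off the previous proposition and the construction in the introduction.
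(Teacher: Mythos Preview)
Your proposal is correct and follows essentially the same approach as the paper: both reduce the ``fundamental hexagon'' requirement to the centering of the two straight segments, then express the centering condition for $V_6V_1$ as equality of the $y$-increments along $v_1v_2$ and $v_5v_6$ (and analogously for $V_3V_4$), reading off the signs to obtain $I_1+I_5=J_1+J_5$ and $I_2+I_4=J_2+J_4$. The only cosmetic difference is that the paper inspects the sign of the integrand in $\phi_1,\phi_2$ directly on each of the two relevant intervals, whereas you package the same information via the edge directions of the Schwarz--Christoffel hexagons; the content is the same.
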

\begin{proof}
	The bottom segment $V_6V_1$ lies in the middle of the bottom face if and only
	if
	\[
		\re \int_{v_1}^{v_2} \omega_2 = \re \int_{v_5}^{v_6} \omega_2 \ .
	\]
	This is equivalent to 
	\[
		\im \int_{v_1}^{v_2} (\phi_2+\phi_1) = \im \int_{v_5}^{v_6} (\phi_2+\phi_1) \ .
	\]
	Observe on $v_1v_2$ that the integrand in $\phi_1$ (resp.\ $\phi_2$) is
	positive (resp.\ negative) imaginary, and on $v_5v_6$ that the integrand in
	$\phi_1$ (resp.\ $\phi_2$) is negative (resp.\ positive) imaginary.  So the
	equation above can be written as
	\[
		I_1 - J_1 = J_5 - I_5,
	\]
	which proves the first period condition.  The second follows analogously.
\end{proof}

We can eliminate $\rho$ by taking the quotient of the two equations, therefore:

\begin{corollary} \label{cor:periodcondition}
	If 
	\[
		Q_I := \frac{I_1+I_5}{I_2+I_4} = \frac{J_1+J_5}{J_2+J_4} =: Q_J
	\]
	or, equivalently, if
	\begin{equation}\label{eq:quotient}
		Q := Q_I - Q_J = \frac{I_1+I_5}{I_2+I_4} - \frac{J_1+J_5}{J_2+J_4} = 0
	\end{equation}
	for some choice of $v_1,\ldots, v_6$, then $\rho \in \R_{>0}$ can be uniquely
	adjusted so that the period conditions~\eqref{eq:periodcond} are satisfied.
\end{corollary}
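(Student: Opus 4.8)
The plan is to prove Corollary~\ref{cor:periodcondition} directly from the preceding proposition, since it is essentially a restatement that separates the role of $\rho$ from the other parameters. First I would observe how the edge lengths $I_k$ and $J_k$ depend on $\rho$. From the Schwarz-Christoffel formulas, $\phi_1$ carries a factor $\rho$ and $\phi_2$ carries a factor $1/\rho$, with all other factors independent of $\rho$. Hence $I_k = \rho\, \hat I_k$ and $J_k = \rho^{-1} \hat J_k$, where $\hat I_k, \hat J_k$ are positive reals depending only on $v_1,\ldots,v_6$. Consequently the quotients $Q_I = (I_1+I_5)/(I_2+I_4) = (\hat I_1+\hat I_5)/(\hat I_2+\hat I_4)$ and $Q_J = (\hat J_1+\hat J_5)/(\hat J_2+\hat J_4)$ are themselves independent of $\rho$, which is why eliminating $\rho$ by taking the quotient is legitimate.

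Next I would set up the two period conditions~\eqref{eq:periodcond} in terms of $\rho$. They read $\rho(\hat I_1 + \hat I_5) = \rho^{-1}(\hat J_1 + \hat J_5)$ and $\rho(\hat I_2 + \hat I_4) = \rho^{-1}(\hat J_2 + \hat J_4)$, i.e.
\[
	\rho^2 = \frac{\hat J_1 + \hat J_5}{\hat I_1 + \hat I_5} = \frac{\hat J_2 + \hat J_4}{\hat I_2 + \hat I_4}.
\]
The point is that each equation alone determines $\rho^2$ uniquely (and positively, since all $\hat I_k, \hat J_k > 0$), so a value of $\rho \in \R_{>0}$ satisfying \emph{both} exists if and only if the two right-hand sides agree. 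Cross-multiplying, the two right-hand sides are equal precisely when $(\hat J_1+\hat J_5)(\hat I_2+\hat I_4) = (\hat J_2+\hat J_4)(\hat I_1+\hat I_5)$, which after dividing by the positive quantity $(\hat I_2+\hat I_4)(\hat J_2+\hat J_4)$ is exactly $Q_I = Q_J$, equivalently $Q = 0$. When this holds, $\rho := \big((\hat J_1+\hat J_5)/(\hat I_1+\hat I_5)\big)^{1/2}$ is the unique positive solution, and by the previous proposition the resulting surface lies in $\cD$.

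There is essentially no obstacle here; the only thing to be careful about is the monotonicity/injectivity claim, namely that for fixed $v_1,\ldots,v_6$ the map $\rho \mapsto \rho^2$ is a bijection of $\R_{>0}$ onto itself, which is immediate, and that all the combinations $\hat I_2 + \hat I_4$, $\hat J_2 + \hat J_4$, etc., are strictly positive so that no division by zero occurs — this follows since each $\hat I_k$, $\hat J_k$ is the modulus of a nonzero integral of a Schwarz-Christoffel form over a nondegenerate interval. If a streamlined writeup is preferred, one can simply say: by the scaling $I_k = \rho\hat I_k$, $J_k = \rho^{-1}\hat J_k$, conditions~\eqref{eq:periodcond} are each equivalent to a value of $\rho^2$, and these two values coincide iff $Q=0$; the corollary then follows from the preceding proposition.
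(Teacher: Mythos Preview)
Your argument is correct and is exactly the fleshed-out version of what the paper indicates by the single remark ``We can eliminate $\rho$ by taking the quotient of the two equations'': using the scaling $I_k=\rho\hat I_k$, $J_k=\rho^{-1}\hat J_k$, each condition in~\eqref{eq:periodcond} fixes $\rho^2$, and the two values agree iff $Q=0$. There is no difference in approach.
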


Thus we have expressed the period condition as a single equation $Q=0$, where
$Q$ depends on six parameters $v_1,\ldots, v_6$.  The number of parameters can
be reduced to three after a normalization by M\"obius transformations.  More
specifically, we can assume
\[
	v_1=-t, v_2 = -a, v_3=-1, v_4=1, v_5=b, v_6=t
\]
with $-t<-a<-1<1<b<t$.  We also assume that $a \le b$.  If it is not the case,
we may simply switch $a$ and $b$; this only exchanges $I_k$ and $J_{6-k}$, $1
\le k \le 5$, up to the scaling $\rho$, hence leaves $Q$ invariant.

\medskip

We note two special cases.

If $a=b$, the period conditions~\eqref{eq:periodcond} are satisfied
automatically with $\rho=1$.  In this case, the involution $z \mapsto
-\overline{z}$ induces an order-2 rotation of the surface about a vertical axis.
This can be seen by noting that $\omega_1$ and $\omega_2$ change sign but
$\omega_3$ keeps sign under this involution.  Indeed, on the imaginary axis
(fixed by the involution), $\phi_1$ and $\phi_2$ are conjugate and $dh$ is
real.  Hence the positive imaginary axis is mapped by the Weierstrass
representation~\eqref{eq:weierstrass} to the vertical straight segment between
the middle points of $V_3V_4$ and of $V_6V_1$, which serves as the axis of the
order-2 rotation.  This shows that the surface is in $\oD$.

If $ab=t$, we will see in Section~\ref{sec:tD} that the period conditions are
satisfied with $Q_I=Q_J=1$ and $\rho^4 = a/b$.  In this case, the involution $\iota: z \mapsto -t/z$
induces an order-2 orientation-preserving rotation of the surface around a horizontal axis, because
\[
	\iota^* dh = -dh \quad \text{and} \quad G(\iota(z)) G(z) = i.
\]
This rotation exchanges
$V_k$ with $V_{k+3}$, $1 \le k \le 3$.  In particular, the segments $V_6V_1$
and $V_3V_4$ must have the same length, implying that the bounding box has a
square base.  The unique fixed point of the involution, namely $i\sqrt{t}$, is
mapped to the fixed point of the rotation.  We will consider this case in
detail in Section \ref{sec:tD}.

\begin{proposition}
	The minimal hexagons $S$ in $\cD$ are embedded. Consequently, the triply
	periodic minimal surfaces generated by extending across symmetry lines are
	embedded as well.
\end{proposition}

\begin{proof}
	Denote the projection onto the $xz$-plane by $\varpi$, and let $V'_i =
	\varpi(V_i)$.  We will prove (refering to Figure \ref{fig:fundpieces} (left)):

	\begin{enumerate}
		\item The boundary of $S$ is a graph over a simple curve $\gamma$ in the
			$xz$-plane, except for the straight segment $V_3V_4$ which is parallel to
			the $y$-axis. Thus $\gamma$ bounds a simply connected (open) domain $\Omega$.

			To see this, note that the Gauss map $G:=i\rho(z-v_2)^{-1/2}
			(z-v_5)^{+1/2}$ is horizontal (i.e.\ perpendicular to the $y$-direction)
			along the segments $V_2V_3V_4V_5$ and strictly monotone.  This implies
			that the arcs $V_2'V_3'$ and $V_4'V_5'$ of $\gamma$ are simple, disjoint,
			and lie in the rectangle $[-A,A]\times[0,1]$. The remaining segments
			$V_5'V_6'$, $V_6'V_1'$ and $V_1'V_2'$ are straight segments on the
			boundary of that rectangle.

		\item The projection $\varpi(S)$ lies within $\overline{\Omega}$.
			
			To see this, assume the opposite.  Take a boundary point of $\varpi(S)$
			that does not lie in $\overline \Omega$. By (the contraposition of) the
			Implicit Function Theorem, its preimage on the $S$ has a horizontal
			normal (parallel to the $xz$-plane).  By the formula for the Gauss map,
			the only points with horizontal normal occur on the boundary of $S$, a
			contradiction.

		\item The projection $\varpi$ restricted to the interior of $S$ has the unique
			path and homotopy lifting properties.

			To see this, we again use that the interior of $S$ has no point with
			horizontal normal. The claim follows from the Implicit Function Theorem,
			applied in the compact region where the curve (or homotopy) resides.
	\end{enumerate}

	Then it follows that the interior of $S$ is a graph over $\Omega$: Otherwise,
	take a curve on $S$ that connects two distinct points in $\varpi^{-1}(p)$, $p
	\in \Omega$. Its projection onto $\Omega$ is closed in $\Omega$ and can be
	retracted onto $p$ within a compact subset of $\Omega$.  By the unique
	homotopy lifting property, the endpoints of the lifted curves stay the same,
	contradicting the assumption that they are two distinct points in
	$\varpi^{-1}(p)$. 
\end{proof}

\section{Branched Values of the Gauss Map}
\label{sec:gauss}

To locate the branched points of the Gauss map, we use the following simple
observation:

\begin{lemma}
	At every orthogonal intersection of a planar symmetry curve and a straight
	line on a minimal surface, the Gauss map has a branched point.
\end{lemma}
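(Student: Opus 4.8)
The plan is to exploit the two symmetries that meet at such a point. Let $p$ be an orthogonal intersection of a planar symmetry curve $\gamma$, lying in a mirror plane $P$ of the minimal surface $S$, with a straight line $\ell \subset S$. Since $S$ meets $P$ orthogonally along $\gamma$, the tangent plane $T_pS$ is spanned by the tangent direction of $\gamma$ together with the unit normal of $P$; because $T_p\ell \subset T_pS$ is orthogonal to $T_p\gamma$ by hypothesis, it follows that $\ell$ is \emph{perpendicular} to $P$ and meets it at $p$. I would isolate this observation first, as it is what makes the argument work: orthogonality of the two curves forces $\ell \perp P$.

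Next I would combine the two symmetries. The surface is invariant under the reflection $R_P$ in $P$ and, by the Schwarz reflection principle, under the half-turn $\rho_\ell$ about $\ell$. Choosing Euclidean coordinates centered at $p$ with $\ell$ the third axis and $P$ the plane of the first two, one checks immediately that $R_P \circ \rho_\ell$ is the point reflection $-\operatorname{Id}$ through $p$. Thus near $p$ the surface is invariant under $-\operatorname{Id}$, whose only fixed point is $p$; its differential there acts on $T_pS$ as $-\operatorname{Id}$, which is orientation preserving in real dimension two. Hence $-\operatorname{Id}$ induces a \emph{holomorphic} involution $\iota$ of the underlying Riemann surface with $p$ an isolated fixed point, and in a suitable conformal coordinate $z$ centered at $p$ we may take $\iota(z) = -z$.

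Finally I would track the Gauss map $g$. The point reflection reverses the ambient orientation but preserves the orientation of $T_pS$, so it carries the oriented unit normal $N(q)$ to $N(-q)$; consequently $g \circ \iota = g$, i.e.\ $g(-z) = g(z)$ (after replacing $g$ by $1/g$ if $g(p) = \infty$). Thus $g$ is an even function of $z$, so $dg$ vanishes at $p$, and since $g$ is nonconstant this exhibits $p$ as a branch point of the Gauss map; equivalently, $S$ is locally the graph of an odd function over $T_pS$ and so has a flat point at $p$. The only delicate step is the orientation bookkeeping in this last paragraph: one must verify that $-\operatorname{Id}$, acting on the oriented surface through its fixed point $p$, \emph{fixes} rather than negates the Gauss map. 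This is precisely where $\ell \perp P$ is used — for a line not perpendicular to $P$ the composition $R_P\rho_\ell$ would be a rotary reflection, $g$ would be \emph{odd} rather than even in $z$, and no branch point would result.
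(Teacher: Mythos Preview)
Your proof is correct, but it takes a somewhat longer route than the paper's.  The paper uses only \emph{one} of the two symmetries: since $\ell\perp P$ (as you also observe), the normal along $\ell$ is perpendicular to $\ell$ and hence lies in (a translate of) $P$; therefore the reflection $R_P$ alone already satisfies $G(R_P(q))=R_P(G(q))=G(q)$ for $q\in\ell$, so $G$ takes the same value at symmetric points on $\ell$ and cannot be locally injective at $p$.  You instead compose both symmetries to produce the point reflection $-\mathrm{Id}$, pass to the conformal structure, and conclude that $g$ is an even holomorphic function of a local coordinate.  The two arguments are close cousins---both amount to showing that $G$ is ``even'' at $p$---but the paper's version is quicker and avoids the orientation bookkeeping you had to do, while your version is more structural and makes the holomorphic reason for the branching explicit.
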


\begin{proof}
	At points on the straight line that are symmetric with respect to the
	symmetry plane, the Gauss map takes the same value.  Hence it cannot be
	single valued in a neighborhood of the intersection point.
\end{proof}

We now show 
\begin{theorem} 
	The branched values of the Gauss map of a surface in $\cD$ are antipodal if
	and only if $a=b$.
\end{theorem}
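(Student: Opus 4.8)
The plan is to compute the eight branched values of the Gauss map explicitly and check when they come in four antipodal pairs. Recall that for a hyperelliptic surface of genus $3$ with Gauss map of degree $2$, the branched values are $G(w)$ where $w$ ranges over the eight Weierstrass points, which here are the six boundary points $v_1,\dots,v_6$ together with one more pair — in fact, since our hexagon $S$ has $90^\circ$ angles at $V_1,V_3,V_4,V_6$ and the Gauss map is vertical only at $V_2,V_5$, the relevant branched values are a mix of ``interior'' branch points and the special points $v_2,v_5$. First I would pin down all eight branch points: using the preceding lemma, each orthogonal intersection of a planar symmetry curve with a straight segment forces a branch point. With the normalization $v_1=-t$, $v_2=-a$, $v_3=-1$, $v_4=1$, $v_5=b$, $v_6=t$, the straight segments $v_6v_1$ and $v_3v_4$ meet the vertical symmetry planes at the midpoints, giving branch points at $z=\pm\infty$ (on the segment $v_6v_1$, carried to $\mathbb{R}\cup\{\infty\}$) and at $z=$ (midpoint of $v_3v_4$)$=0$; together with $v_1,\dots,v_6$ this gives the full list of eight.

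Next I would evaluate $G$ at these eight points. From the formula $G = i\rho\,(z-v_2)^{-1/2}(z-v_5)^{+1/2}$, at $v_2=-a$ we get $G=\infty$ (the north pole), at $v_5=b$ we get $G=0$ (the south pole), and these two are trivially antipodal. At the remaining six points $z\in\{-t,-1,0,1,t,\infty\}$ the value is $G(z)=i\rho\sqrt{(z-b)/(z+a)}$, with the branch of the square root chosen consistently along the real axis. The north–south pair being antipodal is automatic, so the real content is: the six values $G(-t),G(-1),G(0),G(1),G(t),G(\infty)$ must pair up into three antipodal pairs. Under stereographic projection, $G_1$ and $G_2$ are antipodal iff $G_1\overline{G_2}=-1$, i.e. $G_1 = -1/\overline{G_2}$. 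Since $\rho>0$ and the radicand $(z-b)/(z+a)$ is real (positive for $z>b$ or $z<-a$, negative for $-a<z<b$), each $G(z)$ is either purely imaginary or purely real; I would split into these two classes. For $z\in\{-t,\infty,t\}$ (all $>b$ or $<-a$) the value is purely imaginary, say $i\beta_j$ with $\beta_j\in\mathbb{R}$; antipodality of $i\beta_1,i\beta_2$ reads $\beta_1\beta_2=1$. For $z\in\{-1,0,1\}$ (all in $(-a,b)$) the value is real, say $\alpha_j$, and antipodality reads $\alpha_1\alpha_2=-1$. So the natural pairing is $\{-t,\infty,t\}$ among themselves and $\{-1,0,1\}$ among themselves, and in each triple we need one antipodal pair plus — impossible, a triple can't be a single pair. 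Hence the pairing must instead match $z\mapsto -z$ (the symmetry fixing $0$ and $\infty$): pairs $(-t,t)$, $(-1,1)$, and $(0,\infty)$.

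This reduces everything to three scalar equations. Compute $G(-t)G(t) = (i\rho)^2\sqrt{\frac{-t-b}{-t+a}\cdot\frac{t-b}{t+a}} = -\rho^2\sqrt{\frac{(t+b)(t-b)}{(t-a)(t+a)}} = -\rho^2\sqrt{\frac{t^2-b^2}{t^2-a^2}}$, and this is a purely imaginary times imaginary so it's real and negative; antipodality of the purely-imaginary pair $G(\pm t)$ requires their product to be $+1$ (since $(i\beta_1)(i\beta_2)=-\beta_1\beta_2$ and we need $\beta_1\beta_2=1$), i.e. $G(-t)G(t)=-1$, giving $\rho^2\sqrt{(t^2-b^2)/(t^2-a^2)}=1$. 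Similarly $G(-1)G(1) = -\rho^2\sqrt{\frac{(1+b)(1-b)}{(1-a)(1+a)}}$ — careful with signs, since $-1,1\in(-a,b)$ when $a>1$, so the radicands are negative and $G(\pm1)$ are real; their product being $-1$ gives another relation in $\rho,a,b$. Finally $G(0)G(\infty)$: $G(0)=i\rho\sqrt{-b/a}=\pm\rho\sqrt{b/a}$ (real, since $0\in(-a,b)$) and $G(\infty)=i\rho$ (from $\sqrt{(z-b)/(z+a)}\to 1$), so $G(0)G(\infty)=\pm i\rho^2\sqrt{b/a}$, which is purely imaginary — but $G(0)$ is real and $G(\infty)$ is imaginary, so these two are never antipodal unless... one checks that a real and an imaginary point on $\widehat{\mathbb{C}}$ are antipodal only in the degenerate case, forcing instead that $G(0)$ and $G(\infty)$ lie on the same great circle through poles, which happens automatically — so the antipodality condition $G(0)=-1/\overline{G(\infty)}$ becomes $\pm\rho\sqrt{b/a} = -1/(-i\overline{\rho}) $, impossible for real values unless we've mismatched the pairing.

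The resolution — and the step I expect to be the main obstacle — is getting the combinatorics of the pairing exactly right, i.e. proving that \emph{the only possible antipodal pairing} is forced, and then showing the resulting system of equations in $(\rho,a,b,t)$ has a solution only when $a=b$. Concretely: the involution $z\mapsto -z$ does not fix the surface unless $a=b$ (by the discussion of special cases in Section~\ref{sec:period}), so one cannot assume that symmetry; instead I would argue directly that the six non-polar branched values, being three purely imaginary and three real, can only be antipodal if the three imaginary ones pair as $\{1,1\}$ plus a leftover — impossible — \emph{unless} one of the ``real'' values is actually $0$ or $\infty$, i.e. forced to coincide with a pole, OR two of the six values coincide. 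Working this out: $G(z_1)=G(z_2)$ for $z_1\ne z_2$ would force a branch point of higher order, contradicting genus $3$; and $G(v_k)\in\{0,\infty\}$ only at $v_2,v_5$. So the only way three purely-imaginary and three purely-real points form three antipodal pairs is if each pair is (imaginary, imaginary) with product $-$(real)$=+1$ — needs an even number of imaginary ones, but we have three — contradiction, \emph{unless} the split is not $3+3$. The split $3+3$ holds exactly when $1<a$ (so that $-1,0,1$ all lie strictly between $-a$ and $b$) and $t>b$ (automatic). When $a<1$, the point $v_3=-1$ lies in $(-t,-a)$ so $G(-1)$ becomes purely imaginary and the split is $4+2$ — then a $4+2$ antipodal pairing is combinatorially possible, and one must push through the actual equations. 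I would handle the case $a\ge 1$ by the parity obstruction above, and the case $a<1$ by writing out the three product-equations $G(-t)G(t)=-1$, $G(-1)G(\infty)=-1$ (the two imaginary–imaginary ``outer'' pairs in the $4+2$ split), $G(0)G(1)=-1$ (the real–real pair), substituting the radical expressions, eliminating $\rho$, and showing the remaining two equations in $(a,b,t)$ are incompatible with $a<b$ — reducing, after clearing radicals, to a polynomial identity that forces $a=b$. The converse direction ($a=b\Rightarrow$ antipodal) is immediate: when $a=b$ the surface carries the vertical order-$2$ rotation (shown in Section~\ref{sec:period}), which acts on the sphere as the antipodal-commuting rotation $G\mapsto -G$ or $G\mapsto 1/G$ pairing up the branched values, and one reads off the four antipodal pairs directly from $G(z)=i\rho\sqrt{(z-b)/(z+b)}$ evaluated at $\pm t,\pm1,0,\infty$ and at $\pm a=\pm b$.
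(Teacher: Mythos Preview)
Your proposal has a fundamental error at the very first step: the identification of the eight branch points of the Gauss map is wrong, and everything downstream inherits that error.

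The branch points are \emph{not} at $v_1,\dots,v_6,0,\infty$. On the quotient surface $\Sigma$ the Gauss map has degree~$2$, and its eight Weierstrass points sit over $z\in\{\pm 1,\pm t\}$ only (the vertices $V_1,V_3,V_4,V_6$), each occurring twice in a translational fundamental domain. The vertices $V_2,V_5$ (at $z=-a,\,b$) are the simple pole and simple zero of $G$; they are \emph{not} ramification points of the degree-$2$ map $G:\Sigma\to\mathbb{S}^2$. Your extra points $z=0$ and $z=\infty$ come from an implicit use of the symmetry $z\mapsto -\bar z$ (``midpoints of the straight segments meet vertical symmetry planes''), but that symmetry exists only when $a=b$; assuming it for general $(a,b)$ is circular. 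The lemma about orthogonal intersections applies where the planar symmetry arcs (images of $v_1v_2,\,v_2v_3,\,v_4v_5,\,v_5v_6$) meet the straight segments (images of $v_3v_4,\,v_6v_1$), and those meetings are exactly the four corners $V_1,V_3,V_4,V_6$.

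With the correct list, the eight branched values are $\pm G(\pm 1)$ and $\pm G(\pm t)$: four real numbers and four purely imaginary numbers (using $1<a\le b<t$, which is the paper's normalization; your case ``$a<1$'' never occurs). One then rules out the trivial pairings $G(v)\leftrightarrow -G(v)$ and is forced into $\pm G(1)\leftrightarrow \mp G(-1)$ and $\pm G(t)\leftrightarrow \mp G(-t)$. Equating the third (stereographic) coordinates with opposite signs yields
\[
\rho^4=\frac{a^2-1}{b^2-1}\quad\text{and}\quad \rho^4=\frac{t^2-a^2}{t^2-b^2},
\]
and eliminating $\rho$ forces $a=b$ (or $t=0$). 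The converse is a direct check with $\rho=1$. Your parity obstruction (``three real and three imaginary cannot pair'') evaporates once the values are counted correctly as $4+4$, and the genuine argument is this short algebraic elimination rather than a combinatorial impossibility.
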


\begin{proof}
	By the Lemma, the Gauss map has branched points at $V_1$, $V_3$, $V_4$ and
	$V_6$.  On a translational fundamental domain, each of these points occurs
	twice, giving eight branched points as expected.

	Recall that the stereographically projected Gauss map is given by
	\[
		G(z) = i\rho(z+a)^{-1/2}(z-b)^{+1/2}.
	\]
	We then compute the branched values explicitly as
	\begin{align*}
		\pm G(+1) &= \mp\rho\sqrt\frac{b-1}{a+1},&
		\pm G(-1) &= \mp\rho\sqrt\frac{b+1}{a-1},\\
		\pm G(+t) &= \pm i\rho\sqrt\frac{t-b}{t+a},&
		\pm G(-t) &= \pm i\rho\sqrt\frac{t+b}{t-a}.\\
	\end{align*}
	Recall that $-t<-a<-1<1<b<t$, so the expressions under the square roots are
	all  positive real.  We then see that they lie on the real and imaginary axis,
	respectively, which helps matching them in possible antipodal pairs.  Recall
	that, after stereographic projection, the antipodal point of $z$ is
	$-1/\overline{z}$.  

	Assume that the branched values do occur in antipodal pairs and, for the sake
	of contradiction, that $a \ne b$.  First note that
	$G(+1)$ and $-G(+1)$ cannot be antipodal.  Otherwise, $G(-1)$ and $-G(-1)$
	must also be antipodal.  Then they must have the same norm, i.e.\
	$\frac{b-1}{a+1} = \frac{b+1}{a-1}$, forcing $a+b=0$ which violates our
	assumption.  Thus the only possibility is that $\pm G(-1)$ and $\pm G(+1)$
	are antipodal, with two possible choices of signs.  Either choice implies
	that
	\[
		\rho^4=\frac{a^2-1}{b^2-1}.
	\]
	The same analysis on $\pm G(\pm t)$ leads to
	\[
		\rho^4=\frac{t^2-a^2}{t^2-b^2}.
	\]
	Combining the two equations for $\rho^4$ shows, after a brief computation,
	that either $t=1$ or $a=b$.  The contradiction with our assumptions proves
	the ``only if''.

	For the ``if'' part, assume that $a=b$.  Then we find the branched points
	become antipodal (only) with $\rho=1$.  More specifically, we have
	\begin{align*}
		\pm G(+1) &= \mp\sqrt\frac{a-1}{a+1},&
		\pm G(-1) &= \mp\sqrt\frac{a+1}{a-1},\\
		\pm G(+t) &= \pm i\sqrt\frac{t-a}{t+a},&
		\pm G(-t) &= \pm i\sqrt\frac{t+a}{t-a}.\\
	\end{align*}
	Geometrically, these points on the unit sphere are vertices of two axis
	parallel rectangles in the planes $x=0$ and $y=0$, respectively.  Remarkably,
	an image of two such rectangles already appears in Figure~44 of the {\em
	Nachtrag} of Schwarz' paper ``Bestimmung einer speciellen Minimalfl\"ache"
	from 1867. 
\end{proof}

We note that in the case $a=b$ the branched values lie at the vertices of a
cube if and only of $a^2=b^2=t=3$.  This is the case of the classical $D$
surface of Schwarz.

\section{Existence of Non-Trivial Solutions}
\label{sec:exist}

Recall that $1 < a \le b < t$, and the periodic condition~\eqref{eq:quotient}
as we copy below
\[
	Q(a,b;t)= \frac{I_1+I_5}{I_2+I_4} - \frac{J_1+J_5}{J_2+J_4} = 0.
\]
The quantity $Q$ is our focus in the remaining of this paper.  From now on, we
will ignore the Lop\'ez-Ros factor $\rho$ in our calculations, since $Q$ is
independent of this factor.

We now prove the main theorem of this paper.
\begin{theorem} 
	If $a = b$, the period condition~\eqref{eq:quotient} is solved for any choice
	of $t$.

	If $a < b$, then there exists a value of $t$ that solves the period
	condition~\eqref{eq:quotient}.
\end{theorem}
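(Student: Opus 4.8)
The case $a=b$ is already contained in Section~\ref{sec:period}: there the period conditions hold automatically with $\rho=1$. So fix $1<a<b$ and view $Q(a,b;\cdot)$ as a real-analytic function of $t$ on $(b,\infty)$ --- it is analytic there since each $I_k,J_k$ depends analytically on $v_1,\dots,v_6$ while the denominators $I_2+I_4$ and $J_2+J_4$ stay positive. The plan is to produce a zero of this function by the intermediate value theorem, which requires pinning down the sign of $Q$ at the two ends $t\to\infty$ and $t\to b^+$; this is where the asymptotic analysis of the period integrals enters.

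Near $t=\infty$ we are in the Traizet limit: $v_1=-t$ and $v_6=t$ escape while the cluster $\{-a,-1,1,b\}$ is fixed, and a two-scale (matched asymptotic) analysis applies. In the inner region $z=O(1)$ each of the two factors $(z\pm t)^{-1/2}$ contributes $t^{-1/2}$, so $I_2,I_4,J_2,J_4$ each behave like $t^{-1}$ times an explicit finite integral; writing $I_2+I_4\sim t^{-1}(A_2+A_4)$ and $J_2+J_4\sim t^{-1}(B_2+B_4)$ we get
\[ A_2+A_4=\int_1^a\frac{(u+b)^{1/2}\,du}{(a-u)^{1/2}(u^2-1)^{1/2}}+\int_1^b\frac{(b-z)^{1/2}\,dz}{(z+a)^{1/2}(z^2-1)^{1/2}}, \]
while $B_2+B_4$ is the same expression with the exponents at $a$ and $b$ interchanged. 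The four integrals $I_1,I_5,J_1,J_5$ straddle the escaping points and each picks up a logarithmic divergence from the overlap range $1\ll|z|\ll t$; since there the integrand of \emph{every} one of them is asymptotic to $t^{-1/2}|z|^{-1}$, all four have the same leading term $t^{-1/2}\log t$, so $I_1+I_5\sim 2t^{-1/2}\log t\sim J_1+J_5$. Hence
\[ Q(a,b;t)\sim 2\,t^{1/2}\log t\left(\frac{1}{A_2+A_4}-\frac{1}{B_2+B_4}\right), \]
and for large $t$ the sign of $Q$ is the sign of $(B_2+B_4)-(A_2+A_4)$.

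Near $t=b^+$ the points $v_5=b$ and $v_6=t$ collide. Rescaling $z=b+(t-b)s$ isolates the collision: only $I_5$, which vanishes like $t-b$, and $J_5$, which tends to a strictly positive constant, are affected, while every other $I_k,J_k$ converges to a finite positive limit expressible as an explicit integral over one of $(1,a),(1,b),(a,b)$. Thus $Q$ has a finite limit at $t=b^+$, again a difference of explicit integral quotients, and the analysis shows that this limit carries the sign \emph{opposite} to the one found at $t\to\infty$.

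Since the two ends of $(b,\infty)$ carry opposite signs, the intermediate value theorem yields $t^\ast\in(b,\infty)$ with $Q(a,b;t^\ast)=0$, which by Corollary~\ref{cor:periodcondition} gives an $\oDD$ surface. The asymptotic bookkeeping above is routine; the real obstacle is to \emph{establish the two sign statements} --- that $(A_2+A_4)-(B_2+B_4)$ keeps a fixed sign throughout $\{1<a<b\}$, that the limit of $Q$ at $t=b^+$ keeps the fixed opposite sign, and hence that the two ends disagree. Each reduces to comparing a pair of elementary but genuinely different integrals, and here I would exploit the symmetry of the period condition under interchanging $a$ and $b$ (which swaps $A_2+A_4$ with $B_2+B_4$ and makes both sign quantities vanish on the diagonal $a=b$): it then suffices to control the sign of a single derivative in $a$, or to pair the two integrands in the form $\int(\cdot)^{1/2}-\int(\cdot)^{-1/2}$ and estimate the resulting single integrand, while keeping careful track of the branch choices in the Schwarz--Christoffel integrands and of the uniformity of the matched expansions.
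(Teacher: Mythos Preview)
Your overall strategy---intermediate value theorem with sign control at the two ends $t\to b^+$ and $t\to\infty$---is exactly the paper's, and your $t\to\infty$ picture is structurally right (modulo a scaling slip: the overlap integrand is $\sim t^{-1}|z|^{-1}$, not $t^{-1/2}|z|^{-1}$, so the numerators go like $t^{-1}\log t$ and $Q\sim 2\log t\,[(A_2+A_4)^{-1}-(B_2+B_4)^{-1}]$). The paper does carry out the sign inequality $A_2+A_4>B_2+B_4$ you flag as the ``real obstacle'': it sets $f(a,b)=\lim t(I_2-J_2)$, $g(a,b)=\lim t(J_4-I_4)$, observes $f(a,b)=g(b,a)$ and $f(a,a)=\pi$, and checks $\partial_b f>0$, whence $f(a,b)>f(a,a)=g(b,b)>g(a,b)$. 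This is close in spirit to your proposed symmetry-plus-derivative plan.

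Your $t\to b^+$ analysis, however, contains a genuine error. You assert that apart from $I_5\to 0$ and $J_5\to\mathrm{const}>0$, ``every other $I_k,J_k$ converges to a finite positive limit.'' This is false for $J_4$: on $[1,b]$ the integrand of $\phi_2$ carries the factor $(b-z)^{-1/2}(t-z)^{-1/2}$, which for $t\to b^+$ degenerates to $(b-z)^{-1}$, a non-integrable singularity; so $J_4\to\infty$ (logarithmically). Your rescaling $z=b+(t-b)s$ only captures the interval $[b,t]$ and misses this blow-up on the adjacent interval. In the paper this divergence is precisely what makes the $t\to b^+$ sign immediate: $Q_J=(J_1+J_5)/(J_2+J_4)\to 0$ while $Q_I$ stays bounded away from zero, so $Q\to Q_I>0$ with no further integral comparison needed. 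Once you correct this, the argument closes along the paper's lines.
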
 

\begin{figure}
	\includegraphics[width=0.4\textwidth]{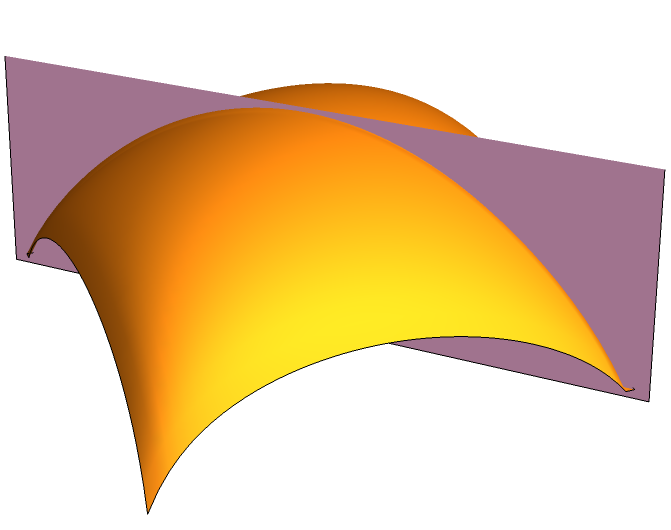}
	\caption{
		Numerical plot of the solution set of $Q(a,b;t) = 0$.  The vertical plane in gray is
		the set of trivial solutions with $a=b$ corresponding to the $\oD$ surfaces.
		The other surface is the set of non-trivial solutions corresponding to the
		$\oDD$ surfaces.
	} \label{fig:solutions}
\end{figure}

The solution set of $Q(a,b;t)=0$ is numerically plotted in
Figure~\ref{fig:solutions}.  The case $a = b$, shown here as a vertical plane
in gray, has been discussed in Section~\ref{sec:period}.  The case $a < b$, as
well as our main theorem, follows from the continuity of $Q$ in $t$, and the
following proposition.

\begin{proposition} \label{prop:lim1}
	If $1<a<b<t$ then
	\begin{align}
		\lim_{t \to b+} Q(a,b;t) &> 0,\label{eq:ttob}\\
		\lim_{t \to +\infty} Q(a,b;t) &= -\infty.\label{eq:ttoi}
	\end{align}
\end{proposition}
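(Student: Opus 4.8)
The plan is to establish the two limits in Proposition~\ref{prop:lim1} via an asymptotic analysis of the Schwarz--Christoffel integrals $I_k$ and $J_k$ as functions of $t$, treating $a,b$ as fixed with $1<a<b$. Recall $v_1=-t,\ v_2=-a,\ v_3=-1,\ v_4=1,\ v_5=b,\ v_6=t$, and that $Q = \frac{I_1+I_5}{I_2+I_4} - \frac{J_1+J_5}{J_2+J_4}$, with $I_k = \bigl|\int_{v_k}^{v_{k+1}}\phi_1\bigr|$, $J_k = \bigl|\int_{v_k}^{v_{k+1}}\phi_2\bigr|$, where $\phi_1,\phi_2$ carry the half-integer exponents listed in Section~\ref{sec:period} (and $\rho$ is dropped). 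The key structural point is that $I_5$ and $J_5$ are integrals over the interval $v_5v_6 = [b,t]$, which \emph{shrinks to a point} as $t\to b+$, while $I_1,J_1$ over $v_1v_2=[-t,-a]$ and the denominator integrals $I_2,I_4,J_2,J_4$ over the fixed intervals $[-a,-1]$ and $[1,b]$ stay bounded and bounded away from zero.

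For the first limit~\eqref{eq:ttob}, I would show that as $t\to b+$: (i) $I_5\to 0$ and $J_5\to 0$ (each integrand has only the integrable singularities $(z-v_5)^{\pm 1/2}$ and $(z-v_6)^{-1/2}$ at the endpoints, and the interval length $t-b\to 0$); (ii) $I_1\to I_1^\flat$ and $J_1\to J_1^\flat$ for finite positive limits obtained by setting $t=b$ in the integrand over $[-t,-a]$ — here I must check that the endpoint $v_1=-t$ merging with $v_6=t$ at infinity is harmless, which it is since those factors $(z-v_1)^{-1/2}(z-v_6)^{-1/2}$ are evaluated at $z\in[-t,-a]$, away from $v_6$; (iii) $I_2,I_4,J_2,J_4$ converge to finite positive limits. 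Thus $\lim_{t\to b+}Q = \frac{I_1^\flat}{I_2^\flat+I_4^\flat} - \frac{J_1^\flat}{J_2^\flat+J_4^\flat}$, and I would identify this limiting configuration: when $t=b$, $v_5$ and $v_6$ coincide, so $\phi_1$ loses the factor $(z-v_5)^{+1/2}(z-v_6)^{-1/2}\to (z-b)^0$ while $\phi_2$ picks up $(z-v_5)^{-1/2}(z-v_6)^{-1/2}\to(z-b)^{-1}$ — i.e. the limiting $\Phi_2$ domain degenerates. The cleanest route is to compare this degenerate limit with a known symmetric configuration or to show directly, using $a<b$, that the $\phi_1$-quotient strictly exceeds the $\phi_2$-quotient; the sign should follow because shrinking $[b,t]$ affects $\phi_1$ and $\phi_2$ asymmetrically (the $(z-v_5)$ exponent is $+1/2$ in $\phi_1$ but $-1/2$ in $\phi_2$). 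I expect this sign determination to be the main obstacle, and would handle it by an explicit estimate or a substitution reducing the limiting ratios to elementary integrals.

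For the second limit~\eqref{eq:ttoi}, I would rescale $z = t\,w$ to extract the $t$-dependence. Under this substitution each factor $(z-v_j)^{\epsilon_j}$ with $v_j=\pm t$ becomes $t^{\epsilon_j}(w\mp 1)^{\epsilon_j}$, while factors with fixed $v_j\in\{-a,-1,1,b\}$ become $t^{\epsilon_j}(w - v_j/t)^{\epsilon_j} \approx t^{\epsilon_j} w^{\epsilon_j}$ for large $t$; counting the exponents (six factors, net exponent $-3$ for $\phi_1$ summing $-1/2$ five times and $+1/2$ once; net exponent $-5/2$ for $\phi_2$ — I will recompute carefully) together with the $dz = t\,dw$ gives an overall power of $t$ for each $I_k$, $J_k$. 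The point is that the numerator integrals $I_1+I_5$ and $J_1+J_5$ (which reach out to the far endpoints $\pm t$) and the denominator integrals $I_2+I_4$, $J_2+J_4$ (confined near the origin in the $w$ variable) scale with \emph{different} powers of $t$, so $Q_I = \frac{I_1+I_5}{I_2+I_4}$ and $Q_J=\frac{J_1+J_5}{J_2+J_4}$ each blow up, and I must show their difference blows up to $-\infty$, i.e. that $Q_J$ dominates. For this I would compute the leading coefficients: after rescaling, the denominators $I_2+I_4$ and $J_2+J_4$ tend (up to the common $t$-power) to fixed constants $c_I,c_J>0$ depending only on $a,b$, while the numerators, after pulling out their $t$-power, behave like $\int$ over $w\in[a/t,1]\cup[1,\ldots]$ of a profile that develops a logarithmic or power divergence as $t\to\infty$ near $w=0$ from the $(w-v_j/t)^{-1/2}$ factors. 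Matching these divergences and their coefficients for $\phi_1$ versus $\phi_2$ — using crucially $a<b$ to break the symmetry — should yield $Q_I - Q_J \to -\infty$. The technical heart here is a uniform asymptotic expansion of the numerator integrals with explicit leading terms; I expect the bookkeeping of which power of $t$ and which divergent integral wins to be delicate, but once the leading-order profiles are written down the sign and the $-\infty$ conclusion should be forced. Finally, continuity of $Q(a,b;\cdot)$ on $(b,\infty)$ is immediate from analyticity of the $I_k,J_k$ in the parameters (stated earlier), so the intermediate value theorem combined with~\eqref{eq:ttob} and~\eqref{eq:ttoi} completes the existence proof.
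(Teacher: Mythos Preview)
Your analysis of the limit $t\to b+$ contains a concrete error. You claim $J_5\to 0$ because ``the interval length $t-b\to 0$'', but on $[b,t]$ the integrand of $\phi_2$ carries \emph{both} endpoint singularities with exponent $-1/2$, and $\int_b^t (t-z)^{-1/2}(z-b)^{-1/2}\,dz=\pi$ regardless of the interval length; so $J_5$ tends to a positive constant, not zero. More seriously, your claim that $J_4$ converges to a finite positive limit is false: on the fixed interval $[1,b]$, the factor $(b-z)^{-1/2}(t-z)^{-1/2}$ in $\phi_2$ develops a non-integrable $(b-z)^{-1}$ singularity at $z=b$ as $t\to b$, and one checks $J_4\to+\infty$. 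You actually notice this collision of exponents a few lines later (``$\phi_2$ picks up $(z-b)^{-1}$''), but you do not draw the consequence for $J_4$. This divergence is precisely the mechanism the paper uses: it forces $Q_J=(J_1+J_5)/(J_2+J_4)\to 0$ while $Q_I$ stays bounded away from $0$, and the sign follows at once. Your plan to extract the sign from a comparison of two finite ratios is therefore built on wrong limiting values and cannot succeed as stated.

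For $t\to\infty$ your outline misses the two specific ingredients that make the argument work. First, the numerator and denominator integrals do \emph{not} scale with different powers of $t$: one has $t\,I_2,\,t\,I_4,\,t\,J_2,\,t\,J_4$ all tending to finite positive limits, while $t\,I_1,\,t\,J_1,\,t\,I_5,\,t\,J_5$ diverge only \emph{logarithmically}, so $Q_I$ and $Q_J$ both blow up with the same leading growth and your rescaling alone will not separate them. Second, and this is where $a<b$ is actually used, one needs the strict inequality $\lim t(I_2+I_4)>\lim t(J_2+J_4)$; the paper obtains it by a monotonicity argument on $f(a,b)=\lim t(I_2-J_2)$ and $g(a,b)=\lim t(J_4-I_4)$, exploiting $f(a,b)=g(b,a)$ and $f(a,a)=\pi$. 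With that inequality in hand, the paper rewrites
\[
Q=\frac{t(I_1-J_1)+t(I_5-J_5)}{t(I_2+I_4)}+t(J_1+J_5)\Bigl[\frac{1}{t(I_2+I_4)}-\frac{1}{t(J_2+J_4)}\Bigr],
\]
where dominated convergence shows the first term is bounded, the bracket is negative and bounded away from zero, and $t(J_1+J_5)\to+\infty$. Your proposal identifies neither the denominator inequality nor this decomposition, and without them the conclusion $Q\to-\infty$ (rather than $+\infty$ or bounded) does not follow.
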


The remainder of this section is devoted to the proof of this proposition.

\medskip

We begin by analyzing the limit $t \to b+$.

\begin{proof}[Proof of~\eqref{eq:ttob}]
	We can evaluate the period integrals explicitly.  Recall that, if $p<q$, we have
	\[
		\int_{p}^{q} \sqrt\frac{1}{(q-z)(z-p)} \, dz = \pi,
		\qquad
		\int_{p}^{q} \sqrt\frac{z-p}{q-z} \, dz = \frac{q-p}{2}\pi.
	\]
	By the Mean Value Theorem for integrals, we have
	\[
		\lim_{t \to b+} I_1(a,b;t)
		= \lim_{t \to b+} \int_{-t}^{-a} \frac{1}{\sqrt{(t^2-z^2)(z^2-1)}}\sqrt\frac{b-z}{-a-z} \, dz
		= C \int_{a}^{b} \sqrt\frac{1}{(b-z)(z-a)} \, dz,
	\]
	where $C = 1/\sqrt{c^2-1}$ for some $c \in [a,t]$.  So this limit is finite
	and non-zero.  Similarly,
	\begin{gather*}
		\lim_{t \to b+} I_2(a,b;t) = C \int_{1}^{a} \sqrt\frac{1}{(a-z)(z-1)} \, dz,\\
		% &= \lim_{t \to b+} \int_{-a}^{-1} \frac{1}{\sqrt{(t^2-z^2)(z^2-1)}}\sqrt\frac{b-z}{z+a} \, dz\\
		\lim_{t \to b+} J_1(a,b;t) = C \int_{a}^{b} \sqrt\frac{z-a}{b-z} \, dz,\qquad
		% &= \lim_{t \to b+} \int_{-t}^{-a} \frac{1}{\sqrt{(t^2-z^2)(z^2-1)}}\sqrt\frac{-a-z}{b-z} \, dz\\
		\lim_{t \to b+} J_2(a,b;t) = C \int_{1}^{a} \sqrt\frac{z-1}{a-z} \, dz,\\
		% &= \lim_{t \to b+} \int_{-a}^{-1} \frac{1}{\sqrt{(t^2-z^2)(z^2-1)}}\sqrt\frac{z+a}{b-z} \, dz\\
		\lim_{t \to b+} I_4(a,b;t) = C \int_{1}^{b} \sqrt\frac{1}{z-1} \, dz,\qquad
		% = C \lim_{t \to b+} \int_{1}^{b} \sqrt\frac{b-z}{(t-z)(z-1)} \, dz = 
		% &= \lim_{t \to b+} \int_{1}^{b} \frac{1}{\sqrt{(t^2-z^2)(z^2-1)}}\sqrt\frac{b-z}{z+a} \, dz\\
		\lim_{t \to b+} J_5(a,b;t) = C \lim_{t \to b+} \int_{b}^{t} \sqrt\frac{1}{(t-z)(z-b)} \, dz
		% &= \lim_{t \to b+} \int_{b}^{t} \frac{1}{\sqrt{(t^2-z^2)(z^2-1)}}\sqrt\frac{z+a}{z-b}\\
	\end{gather*}
	are all finite and non-zero.  Here $C$ denote any finite positive number.  On
	the other hand,
	\[
		\lim_{t \to b+} I_5(a,b;t) = C \lim_{t \to b+} \int_{b}^{t} \sqrt\frac{z-b}{t-z} \, dz = 0\\
		% &=\lim_{t \to b+} \int_{b}^{t} \frac{1}{\sqrt{(t^2-z^2)(z^2-1)}}\sqrt\frac{z-b}{z+a} \, dz\\
	\]
	and
	\[
		\lim_{t \to b+} J_4(a,b;t) \ge C \lim_{t \to b+} \int_{1}^{b} \sqrt\frac{1}{(t-z)(b-z)} \, dz
		% &= \lim_{t \to b+} \int_{1}^{b} \frac{1}{\sqrt{(t^2-z^2)(z^2-1)}}\sqrt\frac{z+a}{b-z} \, dz\\
		% &\ge C \lim_{t \to b+} \int_{1}^{b} \sqrt\frac{1}{(t-z)(b-z)(z-1)} \, dz\\
		% & = C \lim_{t \to b+} \operatorname{arcsinh}\sqrt\frac{b-1}{t-b} = \infty
	\]
	diverges to infinity.  Consequently, as $t \to b+$, $Q_I=(I_1+I_5)/(I_2+I_4)$
	has a finite and non-zero limit, while $Q_J=(J_1+J_5)/(J_2+J_4) \to 0$, hence
	$\lim_{t \to b+} Q > 0$.
\end{proof}

\medskip

Now we turn to the limit $t \to \infty$, which is more amusing.

\begin{proof}[Proof of~\eqref{eq:ttoi}]
	For the periods in the denominators, we note that
	\begin{align*} 
		\lim_{t\to \infty} t \cdot I_2(a,b;t) &= \int_{-a}^{-1} \sqrt\frac{1}{z^2-1} \sqrt\frac{b-z}{a+z}\, dz,\\
		\lim_{t\to \infty} t \cdot J_2(a,b;t) &= \int_{-a}^{-1} \sqrt\frac{1}{z^2-1} \sqrt\frac{a+z}{b-z}\, dz,\\
 		\lim_{t\to \infty} t \cdot I_4(a,b;t) &= \int_{1}^{b} \sqrt\frac{1}{z^2-1} \sqrt\frac{b-z}{a+z}\, dz,\\
 		\lim_{t\to \infty} t \cdot J_4(a,b;t) &= \int_{1}^{b}  \sqrt\frac{1}{z^2-1} \sqrt\frac{a+z}{b-z}\, dz
	\end{align*}
	are all finite.  We now show that
	\begin{equation} \label{eq:denominator}
		\lim_{t\to\infty} t \cdot (I_2+I_4) > \lim_{t\to\infty} t \cdot (J_2+J_4),
	\end{equation}
	or equivalently,
	\[
		\lim_{t\to\infty} t \cdot (I_2-J_2) > \lim_{t\to\infty} t \cdot (J_4-I_4).
	\]

	We prove this by considering the functions
	\begin{align*} 
		f(a,b) &= \lim_{t\to\infty} t \cdot (I_2-J_2) = \int_{1}^{a} \frac{2z-a+b}{\sqrt{(z^2-1)(a-z)(b+z)}}\, dz,\\
		g(a,b) &= \lim_{t\to\infty} t \cdot (J_4-I_4) = \int_{1}^{b} \frac{2z+a-b}{\sqrt{(z^2-1)(a+z)(b-z)}}\, dz,
	\end{align*}
	and show that $f(a,b)>g(a,b)$ for all $1<a<b$.  Note that $f(a,b)=g(b,a)$.
	Since 
	\[
		\frac{\partial}{\partial b} f(a,b) =  \int_1^a \frac{a+b}{\sqrt{(z^2-1)(a-z)(b+z)^3}}\, dz > 0,
	\]
	$f$ is monotone increasing in its second argument for $1<a<b$.  Then $g$ is
	monotone increasing in its first argument.  Note also that $f(a,a) = \pi$ is
	a constant.  Hence
	\[
		f(a,b)>f(a,a)= f(b,b) = g(b,b) > g(a,b),
	\]
	which finishes the proof of~\eqref{eq:denominator}.

	The periods in the numerators are more delicate to deal with, as they have
	logarithmic asymptotics.  For instance,
	\begin{align} 
 		t \cdot J_5(a,b;t)
 		% &= \int_b^t t\sqrt\frac{z+a}{(t^2-z^2)(z^2-1)(z-b)} \, dz \\
 		&= \int_b^t \frac{t}{\sqrt{t^2-z^2}} \sqrt\frac{z+a}{z-b} \sqrt\frac{1}{z^2-1}\, dz\nonumber\\
  	% &= \int_b^t t\sqrt\frac{z+a}{(t^2-z^2)(z^2-1)(z-b)} \, dz - \frac{t}{\sqrt{z(t^2-z^2)(a^2-z^2)}\, dz + \int_c^a  \frac{a}{\sqrt{a^2-z^2}z}\, dz\\
  	&> \int_b^t  \frac{t}{\sqrt{t^2-z^2}} \frac{1}{z} \, dz\nonumber\\
  	% &= \operatorname{arctanh}\frac{\sqrt{t^2-b^2}}{t}
  	&= \log \frac{\sqrt{t^2-b^2}+t}{b}, \label{eq:j5}
	\end{align}
	hence $t \cdot J_5(a,b;t)$ diverges to $+\infty$ as $t \to \infty$.

	Fortunately, the integrals $I_1$ and $J_1$ (and $I_5$ and $J_5$) have the
	same logarithmic singularities.  By the dominated convergence theorem, we
	obtain the following estimates:
	\begin{equation} \label{eq:numerator}
		\begin{aligned} 
			\lim_{t\to\infty} t \cdot (I_1-J_1)
			&= \lim_{t\to\infty} \int_{-t}^{-a} \frac{t(a+b)}{\sqrt{(t^2-z^2)(z^2-1)(b-z)(-a-z)}}\, dz\\
			&= \int_{-\infty}^{-a} \frac{a+b}{\sqrt{z^2-1}\sqrt{b-z}\sqrt{-a-z}}\, dz,\\
			\lim_{t\to\infty} t \cdot (I_5-J_5)
			&= \lim_{t\to\infty} \int_{b}^{t} \frac{-t(a+b)}{\sqrt{t^2-z^2}\sqrt{z^2-1}\sqrt{z-b}\sqrt{z+a}}\, dz\\
			&= \int_{b}^\infty \frac{-a-b}{\sqrt{z^2-1}\sqrt{z-b}\sqrt{z+a}}\, dz.
		\end{aligned}
	\end{equation}
	Note that they are finite and non-zero.

	Finally, we write
	\[
		Q(a,b;t) = \frac{t (I_1 - J_1) + t (I_5 - J_5)}{t I_2 + t I_4} + t (J_1 + J_5) \Big[\frac{1}{t I_2 + t I_4} - \frac{1}{t J_2 + t J_4} \Big].
	\]
	The part in the square bracket is negative by~\eqref{eq:denominator}.  As
	$t\to\infty$, the first fraction is bounded by~\eqref{eq:numerator}, and $J_5
	\to +\infty$.  This then concludes the proof of the proposition.
\end{proof}

Before ending this section, we propose the following uniqueness conjecture
based on numeric experiments:
\begin{conjecture}
	If $a < b$, then there exists a unique $t$ that solves the period
	condition~\eqref{eq:quotient}.
\end{conjecture}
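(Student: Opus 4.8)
The plan is to upgrade the existence argument --- the intermediate value theorem applied to the continuous function $t \mapsto Q(a,b;t)$ on $(b,\infty)$, together with Proposition~\ref{prop:lim1} --- into a uniqueness statement by controlling the sign of $\partial_t Q$. The standard reduction is: it suffices to prove $\partial_t Q(a,b;t) < 0$ whenever $Q(a,b;t) = 0$. Indeed, $Q$ depends analytically on $t$ on $(b,\infty)$, so suppose there were two zeros $t_1 < t_2$; then $Q$ is strictly negative on a right-neighbourhood of $t_1$, and setting $t^\ast = \inf\{t > t_1 : Q(a,b;t) \ge 0\} \le t_2$ we get $t^\ast > t_1$, $Q(a,b;t)<0$ on $(t_1,t^\ast)$, and (by continuity from both sides) $Q(a,b;t^\ast)=0$, which forces the left-hand derivative $\partial_t Q(a,b;t^\ast) \ge 0$, contradicting the hypothesis. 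So everything comes down to a single sign.

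To differentiate the relevant periods $I_1, I_2, I_4, I_5$ and $J_1, J_2, J_4, J_5$ in $t$, I would first move the $t$-dependence out of the integration limits: on $(b,t)$ substitute $z = b + (t-b)s$, on $(-t,-a)$ substitute $z = -t + (t-a)s$, and rescale $(-a,-1)$ and $(1,b)$ to $(0,1)$ as well. After these substitutions each period is an integral over a fixed domain whose integrand is a smooth positive function of $(t,s)$, integrable in $s$ uniformly for $t$ in compact subsets of $(b,\infty)$; this legitimizes differentiation under the integral sign and yields $\partial_t I_k$, $\partial_t J_k$ as absolutely convergent integrals. Assembling these through the quotient rule expresses $\partial_t Q$ as an explicit combination of products of period integrals, and on the locus $Q=0$ one has $Q_I = Q_J$, which lets one clear denominators and reduce the problem to the sign of a single expression bilinear in the $I_k$ and $J_k$.

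The hard part will be showing that this expression is negative: monotonicity of $Q$ in $t$ is a genuinely global, transcendental fact about elliptic-type periods with no elementary closed form, and this is presumably why the statement is only conjectured here. I would attack it in the style of the proof of~\eqref{eq:denominator} in Proposition~\ref{prop:lim1} --- differentiate the integrands with respect to the auxiliary parameters, exploit the symmetry that swapping $a \leftrightarrow b$ interchanges the $I$- and $J$-periods (the content of the $f(a,b)=g(b,a)$ trick), and try to sandwich the quantity of interest between monotone comparisons. A variant that needs only local information is to prove $Q$ is \emph{unimodal} on $(b,\infty)$, e.g.\ that $\partial_t^2 Q < 0$ at every critical point of $Q$; combined with the boundary behaviour of Proposition~\ref{prop:lim1} this again gives uniqueness. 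A third, more geometric route is to read $Q_I$ and $Q_J$ as ratios of edge lengths (equivalently, extremal-length moduli) of the Schwarz-Christoffel hexagons $\Phi_1(\mathbb{H})$ and $\Phi_2(\mathbb{H})$ and to show that increasing $t$ makes one modulus increase while the other decreases, using the explicit dependence of the hexagons on the prevertices. Any of these would close the gap, but each confronts the same essential obstacle: a sharp, non-degenerate inequality between transcendental period integrals.
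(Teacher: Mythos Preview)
The statement you are addressing is a \emph{Conjecture} in the paper, not a theorem: the authors explicitly write ``we propose the following uniqueness conjecture based on numeric experiments'' and offer no proof. There is therefore no argument in the paper to compare your proposal against.

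Your proposal, for its part, is not a proof either, and you say so yourself. The reduction in your first paragraph (``it suffices to prove $\partial_t Q(a,b;t) < 0$ whenever $Q(a,b;t) = 0$'') is correct and standard, and your description of how to differentiate the periods under the integral sign after reparametrizing to fixed domains is sound. But at the decisive step --- establishing the sign of $\partial_t Q$ on the zero locus, or alternatively unimodality of $Q$, or the extremal-length monotonicity you sketch --- you offer only heuristics and candidate strategies, and you explicitly concede that ``each confronts the same essential obstacle: a sharp, non-degenerate inequality between transcendental period integrals.'' That obstacle is exactly why the authors left this as a conjecture. None of the three routes you list is carried to a conclusion, and there is no reason to expect the $f(a,b)=g(b,a)$ symmetry trick from the proof of~\eqref{eq:denominator} to survive once $t$ is finite and all four periods $I_1,I_5,J_1,J_5$ carry logarithmic singularities simultaneously.

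In short: the paper has no proof, and your proposal is an honest research outline rather than a proof. The genuine gap is the sign inequality for $\partial_t Q$ (or an equivalent global statement), and nothing in your write-up closes it.
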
 

\section{Intersection with the Meeks-Locus}
\label{sec:elliptic}

By definition, the two families $\oD \subset \cM$ and $\oDD \subset \cN$ are
disjoint in $\cD$.  However, we will show in this section that the closure
$\overline\oDD$ intersects $\oD$, and give an explicit description of the
intersection in terms of elliptic integrals. This result is not strictly needed
for this paper, but gives insight into the nature of the bifurcation locus.

To make this precise, we use on $\cD$ the topology induced by the space of
possible Weierstrass data, which are determined by the four real parameters $a,
b, t$ and $\rho$.  Clearly, the convergence of Weierstrass data implies the
locally uniform convergence of the minimal surfaces.

The goal is to determine the intersection of the Meeks locus 
\[
	\oD = \{(a,b,t): Q(a,b;t) = 0, a=b, -t<-a<-1<1<b<t\}
\]
with the closure of the non-Meeks locus
\[
	\oDD = \{(a,b,t): Q(a,b;t) = 0, a \ne b, -t<-a<-1<1<b<t\}.
\]

The idea is to divide the function $Q(a,b;t)$ by $b-a$ and take the limit for
$a\to b$ to eliminate solutions in the Meeks locus. We claim: 

\begin{theorem} \label{thm:elliptic}
	The intersection $\overline\oDD \cap \oD$ is described by the equation
	\begin{equation} \label{eq:intersection}
		\KK(m_1)E(m_2) + \EE(m_1)K(m_2) = \KK(m_1)K(m_2),
	\end{equation}
	where
	\begin{align*} 
  	K(m) &= \int_0^{\pi/2} \frac{1}{\sqrt{1-m \sin^2(\theta)}}\, d\theta,  \\
  	E(m) &= \int_0^{\pi/2} {\sqrt{1-m \sin^2(\theta)}}\, d\theta  \\
	\end{align*}
	are complete elliptic integrals of the first and the second kind,
	$\KK(m)=K(1-m)$ and $\EE(m)=E(1-m)$ are the associated elliptic integrals,
	and the moduli
	\[
  	m_1 =\frac{a^2-1}{t^2-1}, \qquad m_2 =\frac{t^2}{a^2}\frac{a^2-1}{t^2-1}.
	\]
	Note that $0<m_1<m_2<1$.
\end{theorem}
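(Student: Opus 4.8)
The plan is to linearize the period function along the diagonal $a=b$. Swapping $a\leftrightarrow b$ interchanges $I_k$ with $J_{6-k}$ (up to the irrelevant Lop\'ez--Ros factor), hence interchanges $Q_I$ with $Q_J$ and sends $Q$ to $-Q$; in particular $Q$ vanishes identically on $\{a=b\}$ and, being real-analytic, is divisible by $b-a$. Write $Q(a,b;t)=(b-a)R(a,b;t)$ with $R$ real-analytic and symmetric in $a,b$. Near the diagonal $\oDD$ is just $\{R=0,\ a\neq b\}$, so a point $(a,a,t)$ with $1<a<t$ lies in $\overline{\oDD}\cap\oD$ exactly when $R(a,a;t)=0$: necessity is clear, and for sufficiency one checks that the analytic hypersurface $\{R=0\}$ is transverse to $\{a=b\}$ there, so that (using the symmetry of $R$) genuine solutions with $a\neq b$ accumulate at the point. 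The goal thus reduces to computing
\[
	R(a,a;t)=\lim_{b\to a}\frac{Q(a,b;t)}{b-a}=\partial_b Q(a,b;t)\big|_{b=a}
\]
and recognizing its vanishing as \eqref{eq:intersection}.

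Set $P_1(a,t):=(I_1+I_5)(a,a;t)$ and $P_2(a,t):=(I_2+I_4)(a,a;t)$. The swap gives $I_k(a,a;t)=J_{6-k}(a,a;t)$, so at $b=a$ both fractions in $Q$ have numerator $P_1$ and denominator $P_2$, whence
\[
	\partial_b Q\big|_{b=a}=\frac{\bigl(\partial_b(I_1{+}I_5)-\partial_b(J_1{+}J_5)\bigr)P_2-P_1\bigl(\partial_b(I_2{+}I_4)-\partial_b(J_2{+}J_4)\bigr)}{P_2^{\,2}}\bigg|_{b=a}.
\]
The diagonal values are elementary: after the substitution $z=-w$ on the relevant intervals and then $u=w^2$,
\[
	P_1=2\int_a^t\frac{w\,dw}{\sqrt{(t^2-w^2)(w^2-1)(w^2-a^2)}}=\frac{2}{\sqrt{t^2-1}}\,\KK(m_1),
\]
\[
	P_2=2a\int_1^a\frac{dw}{\sqrt{(t^2-w^2)(w^2-1)(a^2-w^2)}}=\frac{2}{\sqrt{t^2-1}}\,K(m_2),
\]
using the standard reduction of a cubic (resp.\ quartic) radicand to Legendre normal form; the moduli that appear are exactly $m_1$ and $m_2$, and $0<m_1<m_2<1$ is immediate from $1<a<t$.

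For the four derivatives I differentiate the period integrals in $b$. This needs care, because on the intervals carrying $I_4,I_5,J_4,J_5$ the parameter $b$ is simultaneously an endpoint of integration and the location of an inverse-square-root singularity, so one must justify that Leibniz's rule still applies (the boundary term vanishes because the integrand does, and the differentiated integrand keeps an integrable $(z-b)^{-1/2}$ singularity). One then finds the clean cancellation $\partial_b(I_1+I_5)\big|_{b=a}=0$, whereas $\partial_b(I_2+I_4)\big|_{b=a}=\int_1^a\bigl((t^2-w^2)(w^2-1)(a^2-w^2)\bigr)^{-1/2}dw=P_2/(2a)$. Rather than differentiate $J_1+J_5$ and $J_2+J_4$ directly, I use the \emph{swap identities} $(J_1+J_5)(a,b;t)=(I_1+I_5)(b,a;t)$ and $(J_2+J_4)(a,b;t)=(I_2+I_4)(b,a;t)$; combining them with $\tfrac{d}{da}P_j=\partial_a(\cdots)\big|_{b=a}+\partial_b(\cdots)\big|_{b=a}$ and the preceding two facts gives $\partial_b(J_1+J_5)\big|_{b=a}=\tfrac{d}{da}P_1$ and $\partial_b(J_2+J_4)\big|_{b=a}=\tfrac{d}{da}P_2-P_2/(2a)$.

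Substituting these evaluations, $\partial_bQ\big|_{b=a}=0$ collapses to $P_1\,\tfrac{dP_2}{da}-P_2\,\tfrac{dP_1}{da}=\tfrac{1}{a}P_1P_2$, i.e.\ $\tfrac{d}{da}\log\!\bigl(P_2/(aP_1)\bigr)=0$. Now I insert $P_1=\tfrac{2}{\sqrt{t^2-1}}\KK(m_1)$ and $P_2=\tfrac{2}{\sqrt{t^2-1}}K(m_2)$, compute $\tfrac{dm_1}{da}=\tfrac{2a}{t^2-1}$ and $\tfrac{dm_2}{da}=\tfrac{2t^2}{a^3(t^2-1)}$, and use the classical formulas $\tfrac{dK}{dm}=\tfrac{E-(1-m)K}{2m(1-m)}$ and $\tfrac{d\KK}{dm}=\tfrac{m\KK-\EE}{2m(1-m)}$ together with $1-m_1=\tfrac{t^2-a^2}{t^2-1}$ and $1-m_2=\tfrac{t^2-a^2}{a^2(t^2-1)}$. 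The purely rational contributions then combine (their common numerator collapses to $-a^2(t^2-1)$), leaving $\tfrac{E(m_2)}{K(m_2)}+\tfrac{\EE(m_1)}{\KK(m_1)}=1$, which is \eqref{eq:intersection} after clearing denominators. I expect the main obstacles to be (i) the rigorous differentiation of the singular period integrals in $b$ along the diagonal, where the swap-identity shortcut for the $J$-terms is what keeps the computation tractable, and (ii) the elliptic-integral calculus that makes the rational part cancel exactly.
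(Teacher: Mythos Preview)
Your overall strategy coincides with the paper's: both observe that $Q$ is odd under the swap $a\leftrightarrow b$, divide by $b-a$, and identify $\overline{\oDD}\cap\oD$ with the zero locus of the resulting analytic function $\tilde Q(a,a;t)=\partial_b Q\big|_{b=a}$. Where you diverge is in how you actually evaluate $\partial_b Q\big|_{b=a}$, and your route is noticeably more economical. The paper computes all eight partial derivatives $I'_k,J'_k$ ($k=1,2,4,5$) directly as definite integrals, reduces each one separately to complete elliptic integrals using the Byrd--Friedman tables (the authors remark that a computer algebra system failed them here), and only then combines. You instead exploit the swap identities $(J_1{+}J_5)(a,b)=(I_1{+}I_5)(b,a)$ and $(J_2{+}J_4)(a,b)=(I_2{+}I_4)(b,a)$ to convert the $b$-derivatives of the $J$-sums into the \emph{total} $a$-derivatives of $P_1=(I_1{+}I_5)(a,a)$ and $P_2=(I_2{+}I_4)(a,a)$; since $P_1=\tfrac{2}{\sqrt{t^2-1}}\KK(m_1)$ and $P_2=\tfrac{2}{\sqrt{t^2-1}}K(m_2)$ are already in closed form, you get $dP_j/da$ by the chain rule and the classical formulas for $dK/dm$, bypassing all the hard integral evaluations. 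The only integrals you must still compute directly are $\partial_b(I_1{+}I_5)\big|_{b=a}=0$ (by the $z\mapsto -z$ symmetry) and $\partial_b(I_2{+}I_4)\big|_{b=a}=P_2/(2a)$, both of which are elementary. What your approach buys is a cleaner and essentially self-contained derivation; what the paper's approach buys is the individual values $I'_k,J'_k$ themselves, which are not otherwise needed.

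One point deserves a caveat. For the sufficiency direction you appeal to transversality of $\{R=0\}$ to the diagonal so that genuine off-diagonal solutions accumulate at each zero of $R(a,a;t)$. The paper does not supply this argument either---it simply asserts that the theorem follows from the computation of $\tilde Q(a,a;t)$---so you are not worse off, but as written the transversality is only claimed, not checked. If you want to close this gap you should verify that $\partial_t R(a,a;t)\neq 0$ along the curve $R(a,a;t)=0$ (equivalently, that $\tilde Q(a,a;t)$ changes sign in $t$ there), which together with the $a\leftrightarrow b$ symmetry of $R$ forces nearby zeros of $R$ off the diagonal.
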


\begin{remark}
	It is interesting to notice the similarity of~\eqref{eq:intersection} with
	the Legendre relation $\KK(m)E(m)+\EE(m)K(m)-\KK(m)K(m)=\pi/2$.
\end{remark}

Before we sketch the technical proof, we note that the function $Q$ can be
extended to a holomorphic function of its arguments $a,b$ and $t$ for $a$ near
$b$. To see this, note that the integrand of each of the integrals $I_k$ and
$J_k$ used in the definition of $Q$ can be adjusted by multiplication with a
constant factor $e^{i t}$ so that the absolute values are not necessary. The
square roots of the integrands cause a potential multivaluedness when the roots
$-t,-a,b$ and $t$ are close to each other, which is not the case for $a$ near
$b$. As $Q(a,a,t)=0$, this implies that also $\tilde Q$ extends to a
holomorphic function of its arguments. In particular, the extension of $\tilde
Q$ for real arguments is real analytic.

The theorem follows from the following proposition:

\begin{proposition}
	The function
	\[
		\tilde Q(a,b;t) = \frac{1}{b-a}Q(a,b;t) 
	\]
	extends analytically to $a=b$ by
	\[
		\tilde Q(a,a;t)= \frac{a(t^2-1)}{(a^2-1)(t^2-a^2)}\frac{\KK(m_1)K(m_2)-\KK(m_1)E(m_2)-\EE(m_1)K(m_2)}{K(m_2)^2}.
	\]
\end{proposition}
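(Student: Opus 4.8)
The plan is to compute the limit $\lim_{a\to b}\tilde Q(a,b;t)$ directly from the integral formulas for $I_k$ and $J_k$, using the special-case simplifications that occur at $a=b$. Recall that with the normalization $v_1=-t$, $v_2=-a$, $v_3=-1$, $v_4=1$, $v_5=b$, $v_6=t$, each $I_k$ (resp.\ $J_k$) is the absolute value of the integral of $\phi_1$ (resp.\ $\phi_2$) over $[v_k,v_{k+1}]$. When $a=b$ the period condition is satisfied automatically with $\rho=1$, i.e.\ $I_1+I_5=J_1+J_5$ and $I_2+I_4=J_2+J_4$, so the numerators $Q_I-Q_J$ both vanish at $a=b$; this is exactly why $Q$ is divisible by $b-a$ and why the quotient $\tilde Q$ has a finite limit. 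The first step is therefore to expand $Q=Q_I-Q_J$ to first order in $b-a$ around $a=b$. Writing $Q=\dfrac{(I_1+I_5)(J_2+J_4)-(J_1+J_5)(I_2+I_4)}{(I_2+I_4)(J_2+J_4)}$, at $a=b$ the common value $D:=(I_2+I_4)|_{a=b}=(J_2+J_4)|_{a=b}$ appears, so the limit equals
\[
	\tilde Q(a,a;t)=\frac{1}{D^2}\,\partial_{b-a}\Big[(I_1+I_5)(J_2+J_4)-(J_1+J_5)(I_2+I_4)\Big]\Big|_{a=b}.
\]
So I need the value of $D$ and the derivative in the square bracket at $a=b$.

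The second step is to evaluate the needed quantities at $a=b$. With $a=b$ the integrand square roots simplify considerably: several factors $(z+a)$ and $(z-b)$ no longer need to be split between $\phi_1$ and $\phi_2$, and the substitution $z^2\mapsto \text{new variable}$ turns the period integrals into standard elliptic integrals in Legendre normal form. Concretely, $I_2,I_4,J_2,J_4$ each become (after a rational substitution and the Legendre reduction) expressions involving $K(m_2)$, since their range of integration and branch points pair up as $\{1,a\}$ and $\{-a,-t\}\leftrightarrow$ their squares $\{1,a^2,t^2\}$, and the relevant modulus is $m_2=\frac{t^2}{a^2}\frac{a^2-1}{t^2-1}$. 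This should give $D=$ (an explicit algebraic prefactor)$\times K(m_2)$, accounting for the $K(m_2)^2$ in the denominator of the claimed formula and the algebraic prefactor $\frac{a(t^2-1)}{(a^2-1)(t^2-a^2)}$. Meanwhile the first-order term in $b-a$ of the square bracket is where the second kind integral $E$ enters: differentiating a square root $\big((z+a)(z-b)\big)^{\pm1/2}$ with respect to $b-a$ produces a factor $\mp\frac{1}{2(z\mp\cdots)}$ inside the integral, and after substitution these produce combinations that reduce to $E(m_2)$ and to the complementary integrals $\KK(m_1),\EE(m_1)$ with $m_1=\frac{a^2-1}{t^2-1}$ — the complementary modulus showing up because one of the four intervals ($v_1v_2$ and $v_5v_6$, carrying the $I_1,I_5,J_1,J_5$ integrals) lies ``outside'' the others and its reduction naturally produces the $1-m$ version. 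The upshot is the numerator $\KK(m_1)K(m_2)-\KK(m_1)E(m_2)-\EE(m_1)K(m_2)$.

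The third step is bookkeeping: carefully track signs (the $I_k,J_k$ are absolute values, and the asymmetry between the ``$+1/2$'' exponent at $v_2$ for $\phi_2$ versus at $v_5$ for $\phi_1$ must be handled with care), verify $0<m_1<m_2<1$ from $1<a<t$, and check that the two Legendre reductions (for the outer intervals and the inner intervals) are compatible — i.e.\ that the substitution diagonalizing one pair of branch points is consistent with the other. The main obstacle I expect is precisely this reduction-to-standard-form computation: getting every period integral into Legendre normal form with the right modulus, and in particular seeing cleanly why the four ``outer'' periods $I_1,J_1,I_5,J_5$ contribute the complementary-modulus integrals $\KK(m_1),\EE(m_1)$ while the ``inner'' periods contribute $K(m_2),E(m_2)$. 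Once the substitutions are pinned down, the differentiation in $b-a$ and the final algebraic simplification to extract the prefactor $\frac{a(t^2-1)}{(a^2-1)(t^2-a^2)}$ are routine. Theorem~\ref{thm:elliptic} then follows immediately: $\overline\oDD\cap\oD$ consists of the $a=b$ points that are limits of genuine solutions with $a\ne b$, which (since $Q=(b-a)\tilde Q$) are exactly the zeros of $\tilde Q(a,a;t)$, i.e.\ the zeros of the numerator, which is~\eqref{eq:intersection} after renaming.
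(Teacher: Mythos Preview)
Your approach is correct and essentially the same as the paper's: both compute the periods $I_k,J_k$ and their first-order variation at $a=b$ in terms of complete elliptic integrals, then assemble via L'Hospital (your ``$\partial_{b-a}$'' is the paper's $\partial/\partial b$ at $b=a$). The paper likewise omits the explicit reductions to Legendre normal form and simply records the resulting expressions. One point you leave implicit but the paper states: computing the limit gives continuity, but the claim is \emph{analytic} extension; the paper handles this by noting that $\tilde Q$ extends holomorphically in complex $(a,b,t)$ and remains bounded along $a=b$, so Riemann's removable singularity theorem applies.
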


\begin{remark}
	Technical details in the following proof are omitted.  The integrals we need
	can all be evaluated in terms of the complete elliptic integrals of the first
	and the second kind.  Integral tables in~\cite{byrd1971} have been very
	helpful for this purpose, especially after a well-known computer algebra
	system failed us here.
\end{remark}

\begin{proof}
	With the help of the integral tables in~\cite{byrd1971}, we obtain the
	following explicit evaluation of the periods.
	% \begin{align*}
  %   I_1(a,a;t) &= J_5(a,a;t) = \frac{\phantom{-}\KK(m_1)+\KK(m_2)}{\sqrt{t^2-1}}. \\
  %   I_2(a,a;t) &= J_4(a,a;t) = \frac{\phantom{-}K(m_1)+K(m_2)}{\sqrt{t^2-1}}, \\
  %   I_4(a,a;t) &= J_2(a,a;t) = \frac{-K(m_1)+K(m_2)}{\sqrt{t^2-1}}, \\
  %   I_5(a,a;t) &= J_1(a,a;t) = \frac{\phantom{-}\KK(m_1)-\KK(m_2)}{\sqrt{t^2-1}},
	% \end{align*}
	\begin{align*}
  	(I_1 + I_5)(a,a;t) = (J_1 + J_5)(a,a;t) &= \frac{2 \KK(m_1)}{\sqrt{t^2-1}},\\
  	(I_2 + I_4)(a,a;t) = (J_2 + J_4)(a,a;t) &= \frac{2 K(m_2)}{\sqrt{t^2-1}}.
	\end{align*}

	Then we evaluate the derivatives
	\[
		I'_k(a,a;t) = \frac{\partial}{\partial b} \Bigr|_{a=b} I_k(a,b,t),\qquad
		J'_k(a,a;t) = \frac{\partial}{\partial b} \Bigr|_{a=b} J_k(a,b,t),
	\]
	and obtain
	% \begin{align*}
  % 	I'_1(a,a;t) &=-I'_5(a,a;t) = \frac{\KK(m_2)}{2a\sqrt{t^2-1}},\qquad
	% 	I'_2(a,a;t)  = \phantom{-}I'_4(a,a;t) = \frac{K(m_2)}{2a\sqrt{t^2-1}},\\
  % 	J'_1(a,a;t) &= -\frac{\KK(m_2)}{2a\sqrt{t^2-1}} - \frac{1}{\sqrt{t^2-1}}\frac{a}{t^2-a^2}[-\KK(m_1)+\KK(m_2)] + \frac{\sqrt{t^2-1}}{t^2-a^2}\frac{a}{a^2-1}[-\EE(m_1)+\EE(m_2)],\\
  % 	J'_2(a,a;t) &= \phantom{-} \frac{K(m_2)}{2a\sqrt{t^2-1}} - \frac{1}{\sqrt{t^2-1}}\frac{a}{a^2-1}\;[-K(m_1)+K(m_2)] + \frac{\sqrt{t^2-1}}{t^2-a^2}\frac{a}{a^2-1}[-E(m_1)+E(m_2)],\\
  % 	J'_4(a,a;t) &= \phantom{-} \frac{K(m_2)}{2a\sqrt{t^2-1}} - \frac{1}{\sqrt{t^2-1}}\frac{a}{a^2-1}\;[\phantom{-} K(m_1)+K(m_2)] + \frac{\sqrt{t^2-1}}{t^2-a^2}\frac{a}{a^2-1}[\phantom{-} E(m_1)+E(m_2)],\\
  % 	J'_5(a,a;t) &= \phantom{-} \frac{\KK(m_2)}{2a\sqrt{t^2-1}} - \frac{1}{\sqrt{t^2-1}}\frac{a}{t^2-a^2}[-\KK(m_1)-\KK(m_2)] + \frac{\sqrt{t^2-1}}{t^2-a^2}\frac{a}{a^2-1}[-\EE(m_1)-\EE(m_2)].
	% \end{align*}
	% They are combined into
	\begin{align*} 
		(I'_2+I'_4)(a,a;t) &= \frac{K(m_2)}{a\sqrt{t^2-1}},\qquad
		(I'_1+I'_5)(a,a;t)  = 0,\\
		(J'_1+J'_5)(a,a;t) &= \phantom{\frac{K(m_2)}{a\sqrt{t^2-1}}-} \frac{2a\KK(m_1)}{\sqrt{t^2-1}(t^2-a^2)}-\frac{2a\EE(m_1)\sqrt{t^2-1}}{(a^2-1)(t^2-a^2)},\\
		(J'_2+J'_4)(a,a;t) &= \frac{K(m_2)}{a\sqrt{t^2-1}} - \frac{2 a K(m_2)}{\sqrt{t^2-1}(a^2-1)}+\frac{2a E(m_2)\sqrt{t^2-1}}{(a^2-1)(t^2-a^2)}.
	\end{align*}

	Finally, by L'H\^opital,

	\begin{multline}
 		\lim_{a \to b} \frac{1}{b-a}Q(a,b;t)
 		= \frac{\partial Q}{\partial b} \Bigr|_{a=b}\\
		= \frac{a(t^2-1)}{(a^2-1)(t^2-a^2)}\frac{\KK(m_1)K(m_2)-\KK(m_1)E(m_2)-\EE(m_1)K(m_2)}{K(m_2)^2}.
	\end{multline}

\end{proof}

\section{The Tetragonal Case}
\label{sec:tD}

We denote by $\cT$ surfaces in $\cD$ with tetragonal lattice.  That is, their
unit cells are prisms over squares.  We have seen that this occurs when $ab=t$.
Again, we have the classical family $\tD = \oD \cap \cT$ when $a=b=\sqrt{t}$.
The final specialization arises when $t=3$.  In this case, all diagonals and
midpoint bisectors of the embedded minimal hexagon are straight lines, and we
have the classical $D$ surface.

In this section we will show that $\tDD=\oDD \cap \cT$ is non-empty and, in
fact, contains a 1-parameter family of surfaces meeting $\tD$ on its boundary.
More specifically, these surfaces are characterized by $ab=t$, hence they all
admit a conformal involution, that exchanges $V_k$ with $V_{k+3}$, $1 \le k \le
3$.

\begin{lemma}\label{lem:tetraPerCond}
	When $ab=t$, the period condition is solved if and only if $I_1+I_5=I_2+I_4$,
	in which case $\rho^4=a/b$.
\end{lemma}
\begin{proof}
	The assumption $t=ab$ implies that
	\[
		I_k = \rho^2 \sqrt\frac{b}{a} J_{k+3}, \qquad \text{and} \qquad J_k =
		\frac{1}{\rho^2}\sqrt\frac{a}{b} I_{k+3}
	\]
	for $k=1,2,3$.  Therefore
	\[
		Q_I=\frac{I_1+I_5}{I_2+I_4}=\frac{J_2+J_4}{J_1+J_5}=Q_J^{-1}.
	\]
	Hence $Q=Q_I-Q_J=0$ implies that $Q_I=1$.
\end{proof}

We use this lemma to construct right angled hexagons that solve the period
problem.

Begin with an axis parallel rectangle $R$ of size $1 \times A$, where $1< A <
2$ is the height; see Figure~\ref{fig:extremal}.  Draw a line from the top left
vertex of $R$ in the $45^\circ$ south-east direction.  Choose a point $p$ on
this line in the lower half of $R$ (possible because $A<2$), and use it as the
bottom right vertex of a smaller rectangle $R'$ with the same symmetries.  Cut
the rectangular annulus between $R$ and $R'$ into four along the symmetry
lines.  The top right component is a right angled hexagon that solves the
period problem. 

Its conformal type, however, is still too general.  It needs to have a
holomorphic involution permuting the edges. 

\begin{figure}[hbt]
	\includegraphics[width=0.5\textwidth]{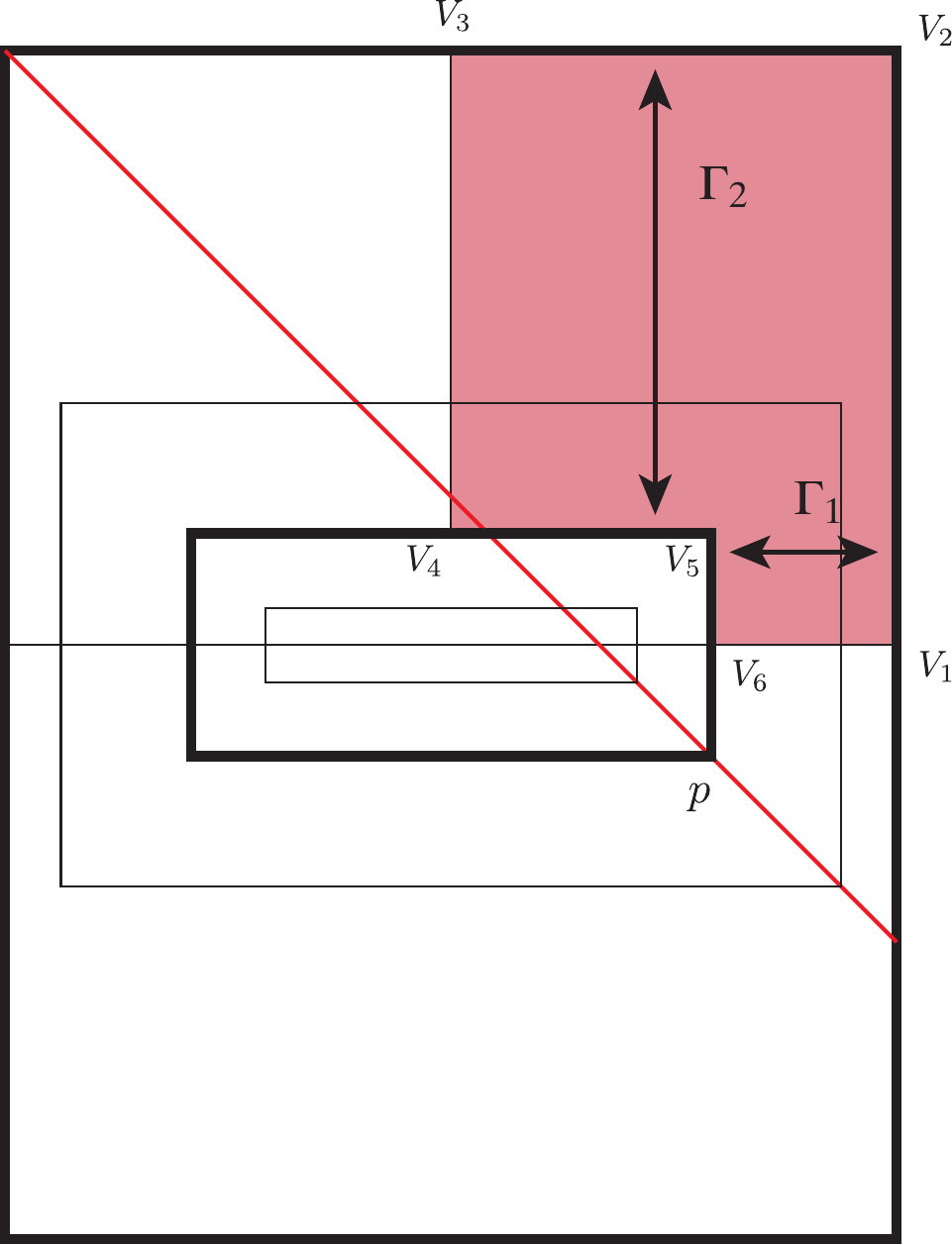}
	\caption{Existence Proof for $\tDD$}
	\label{fig:extremal}
\end{figure}

\begin{theorem} 
	For any choice of $1<A<2$, there is a choice of $p$ so that the hexagon has a
	conformal involution.
\end{theorem}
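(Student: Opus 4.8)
The plan is to set up a one-parameter extremal length (or equivalently, conformal modulus) comparison. Fix $1<A<2$. As the point $p$ slides along the $45^\circ$ diagonal from the top-left corner of $R$ toward its lower half, the top-right hexagon $H(p)$ cut from the rectangular annulus varies in a real-analytic one-parameter family of right-angled hexagons, all of which already solve the period condition $I_1+I_5=I_2+I_4$ by the preceding Lemma. The remaining task is purely conformal: among this family, find a $p$ for which $H(p)$ admits a holomorphic involution $\iota$ interchanging the edges as required (so that the associated sphere branched over six points has the extra symmetry $z\mapsto -t/z$ needed to land in $\cT$, matching the labeling $V_k \leftrightarrow V_{k+3}$).

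First I would make the involution condition explicit as an equality of conformal moduli. A right-angled hexagon $H$ (topologically a disk with six distinguished boundary vertices and alternating ``free''/``fixed'' edges) has a conformal involution permuting the edges in the prescribed way precisely when two naturally associated quadrilaterals — obtained by grouping the six edges into two triples, i.e. cutting $H$ along the would-be fixed-point locus of $\iota$ — have the same conformal modulus. Concretely, label the two quadrilaterals $H^{(1)}(p)$ and $H^{(2)}(p)$ (the two ``halves'' that $\iota$ would swap), and define $F(p) = \operatorname{mod} H^{(1)}(p) - \operatorname{mod} H^{(2)}(p)$, a real-analytic function of $p$. The involution exists exactly when $F(p)=0$: given equal moduli, the Riemann mapping theorem produces a conformal equivalence $H^{(1)}(p)\to H^{(2)}(p)$ respecting the vertices, and reflecting across the cut glues it into the desired $\iota$ on $H(p)$.

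Next I would establish a sign change of $F$ at the two ends of the parameter interval for $p$. At one extreme — $p$ approaching the lower-right region so that $R'$ degenerates, e.g. $R'$ shrinking to a segment or $p$ hitting the bottom edge of $R$ — one of the two quadrilaterals becomes thin (modulus $\to 0$ or $\to\infty$) while the other stays bounded, forcing $F$ to have a definite sign. At the opposite extreme — $p$ near the top-left corner, so $R'$ nearly fills $R$ and the annulus between them is thin — the roles reverse, or more precisely the hexagon degenerates in a complementary way, giving $F$ the opposite sign. Extremal-length monotonicity (Grötzsch-type inequalities for the nested rectangles, together with the explicit $45^\circ$ constraint that ties the horizontal and vertical gaps between $R$ and $R'$) is what makes these boundary estimates go through: as $p$ moves, one gap grows while the other shrinks, so the two moduli move oppositely. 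Then continuity of $F$ (itself a consequence of continuous dependence of extremal length on the domain, via the convergence of the Weierstrass/Schwarz-Christoffel data as in Section~\ref{sec:elliptic}) and the intermediate value theorem yield a zero $p_0$ of $F$, and $H(p_0)$ is the desired hexagon.

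The main obstacle I expect is the pair of degeneration estimates at the endpoints of the $p$-interval: one must verify that the two grouped quadrilaterals really do degenerate in \emph{opposite} directions (rather than both collapsing, or both staying bounded, which would leave the sign of $F$ undetermined). This requires care in choosing which triple of edges forms $H^{(1)}$ versus $H^{(2)}$ — the grouping is dictated by how $\iota$ must act on the labeled vertices $V_1,\dots,V_6$ — and then tracking, under the $45^\circ$ degeneration, which of the two resulting quadrilaterals inherits the ``thin'' direction of the annulus. A clean way to control this is to bound each modulus between the moduli of explicit rectangles (the metric rectangles whose sides are the relevant horizontal/vertical distances in the annulus picture of Figure~\ref{fig:extremal}), using the standard fact that the extremal length of the connecting curve family in a quadrilateral is squeezed between those of inscribed and circumscribed rectangles; the $45^\circ$ line then forces the two bounding rectangles to have reciprocally varying aspect ratios, which delivers the sign change. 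Uniqueness of $p_0$ (hence that $\tDD$ is genuinely a $1$-parameter family as $A$ varies) would follow if $F$ is monotone in $p$, which the same monotonicity of extremal length under the nested-rectangle degeneration should give, though for the existence statement as stated only the sign change is needed.
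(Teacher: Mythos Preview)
Your overall strategy---set up a continuous one-parameter conformal invariant, analyze its behavior at the two degenerate endpoints of the $p$-interval, and apply the intermediate value theorem---is exactly the paper's strategy. The gap is in how you propose to detect the involution.

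You describe cutting $H(p)$ along ``the would-be fixed-point locus of $\iota$'' into two halves $H^{(1)}$, $H^{(2)}$ that $\iota$ swaps, and then comparing their moduli. But the involution in question is conformally a $180^\circ$ rotation (on the upper half plane it is $z\mapsto -t/z$, with the single interior fixed point $i\sqrt t$); it is \emph{not} a reflection, has no fixed arc, and does not exchange two complementary pieces of the hexagon. So there is no canonical cut, and ``equal moduli of the two halves'' is not a well-posed criterion for the existence of $\iota$. Your gluing step (``reflecting across the cut glues it into $\iota$'') would in any case produce an anti-holomorphic involution, not the holomorphic one required.

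The paper's fix is to compare the extremal lengths of two curve families $\Gamma_1$, $\Gamma_2$ in the hexagon that are interchanged by the \emph{topological} order-$2$ rotation. After mapping to the upper half plane via the inverse Schwarz--Christoffel map, $\ext\Gamma_1$ is the modulus of the conformal quadrilateral $v_1v_2v_5v_6$ and $\ext\Gamma_2$ that of $v_2v_3v_4v_5$; these are cross ratios, and the equation ``equal cross ratios'' reduces algebraically to $ab=t$, which is exactly the condition for the holomorphic involution. The endpoint analysis is then simpler than you anticipate: sliding $p$ to one side pinches the edge $V_5V_6$ and to the other side pinches $V_6V_1$, sending $\ext\Gamma_1$ to $\infty$ or $0$ respectively, while $\ext\Gamma_2$ stays bounded away from $0$ and $\infty$ throughout. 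No reciprocal Gr\"otzsch estimates between the two quantities are needed.
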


\begin{proof}
	The proof uses an extremal length argument.

	Consider the curve families $\Gamma_1$ and $\Gamma_2$ connecting edges as in
	Figure~\ref{fig:extremal}. These families are obtained from each other by the
	topological order 2 rotation.  So in a conformally correct hexagon, they need
	to have the same extremal length.

	Vice versa, we claim that if $\ext\Gamma_1=\ext\Gamma_2$ then the hexagon has a
	conformal involution. To see this, we map the hexagon to the upper half plane
	by the inverse of the Schwarz-Christoffel map $z \mapsto \int^z \phi_1$. The
	hexagon vertices $V_i$ are mapped to real numbers $v_i$, and the curve family
	$\Gamma_1$ is mapped to the curves family connecting the edge $v_1v_2$ with
	the edge $v_5v_6$.  Therefore its extremal length is that of the conformal
	rectangle $v_1v_2v_5v_6$, and thus determines the cross ratio of these four
	points.  Similarly, the extremal length of $\Gamma_2$ determines the cross
	ratio of the four points $v_2v_3v_4v_5$.  If we normalize the $v_i$ as
	before, the equality of these cross ratios
	\[
		\frac{(a+t)(b+t)}{2t(a+b)} = \frac{(a+1)(b+1)}{2(a+b)}
	\]
	implies that $ab=t$, so the hexagon has indeed a conformal involution.

	Thus we have to show that we can adjust the position of $p$ so that the two
	extremal lengths are equal.  Note that moving $p$ to the left will pinch the
	vertical edge $V_5V_6$, while moving $p$ to the right will pinch the
	horizontal edge $V_6V_1$. This shows that the extremal length of $\Gamma_1$
	will vary between infinity and 0.  On the other hand, during this variation,
	the extremal length of $\Gamma_2$ stays bounded away from 0 and infinity.
	Hence there must be a $p$ for which $\ext\Gamma_1 = \ext\Gamma_2$.
\end{proof}

Note that the $\tD$ family corresponds to the case when both rectangles
degenerate to squares.

\begin{remark}
	In the tetratonal case $ab=t$, the substitution $\zeta = z - t/z$ allows us
	to express the $I_k$'s in terms of the complete elliptic integral $K(\mu)$
	with complex modulus
	\[
		% \mu = \frac{(a+1)(a-t)}{2(t-1)}\frac{(\sqrt{t}-i)^2}{(a+i\sqrt{t})^2}.
		\mu = \frac{(1+a)(1-b)}{2}\frac{(\sqrt{t}-i)^2}{(t-1)}\frac{1}{(\sqrt{a}+i\sqrt{b})^2}.
	\]
	Then the period condition in Lemma~\ref{lem:tetraPerCond} is equivalent to
	\[
		\cot\Big(\arg\frac{K(\mu)+iK'(\mu)}{\sqrt{b}-i\sqrt{a}}\Big) = \frac{\sqrt{b}-\sqrt{a}}{\sqrt{b}+\sqrt{a}}.
		% (a+\sqrt{t}) \im (K(\mu)-iK'(\mu)) = (a-\sqrt{t}) \re (K(\mu)-iK'(\mu)).
	\]
\end{remark}

The intersection with $\tD$ can be determined explicitly using the equation
from Section \ref{sec:elliptic}.  Note that for $a=b=\sqrt{t}$, we have
\[
	m=m_2=1-m_1 = \frac{a^2}{1+a^2}.
\] 
Simplifying~\eqref{eq:intersection} shows that the intersection occurs when
\[
	2E(m)=K(m).
\]
This is solved numerically with $a=a^*\approx 2.17966$.  We use
$\tD^*$ to denote the surface with parameters $a=b=\sqrt{t}=a^*$.  In
Figure~\ref{fig:tD} we compare Schwarz' D surface, the most symmetric surface
in the $\tD$ family, with the surface $\tD^*$ at the junction of $\tD$ and
$\tDD$.

\begin{figure}[hbt]
	\includegraphics[width=0.48\textwidth]{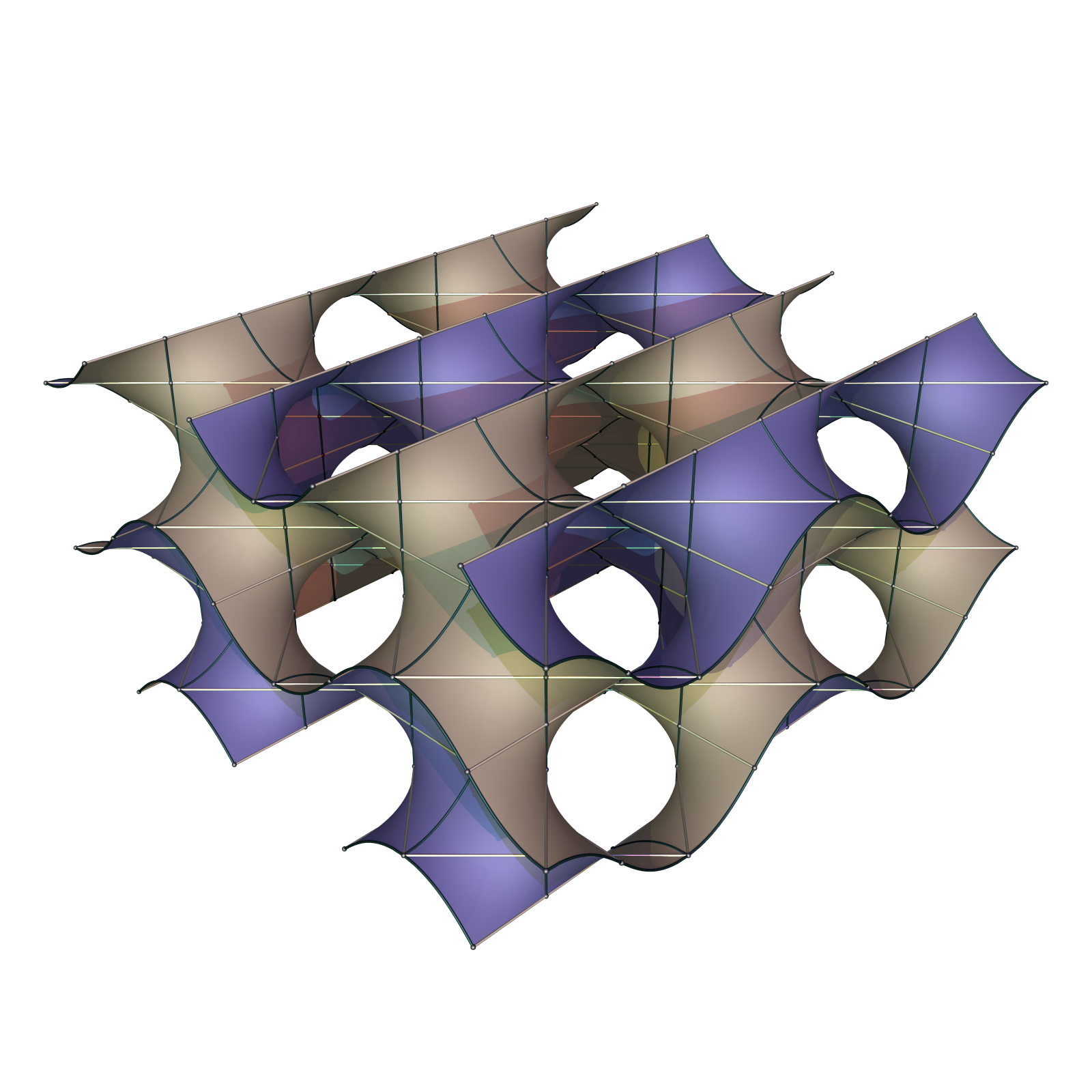}
	\includegraphics[width=0.48\textwidth]{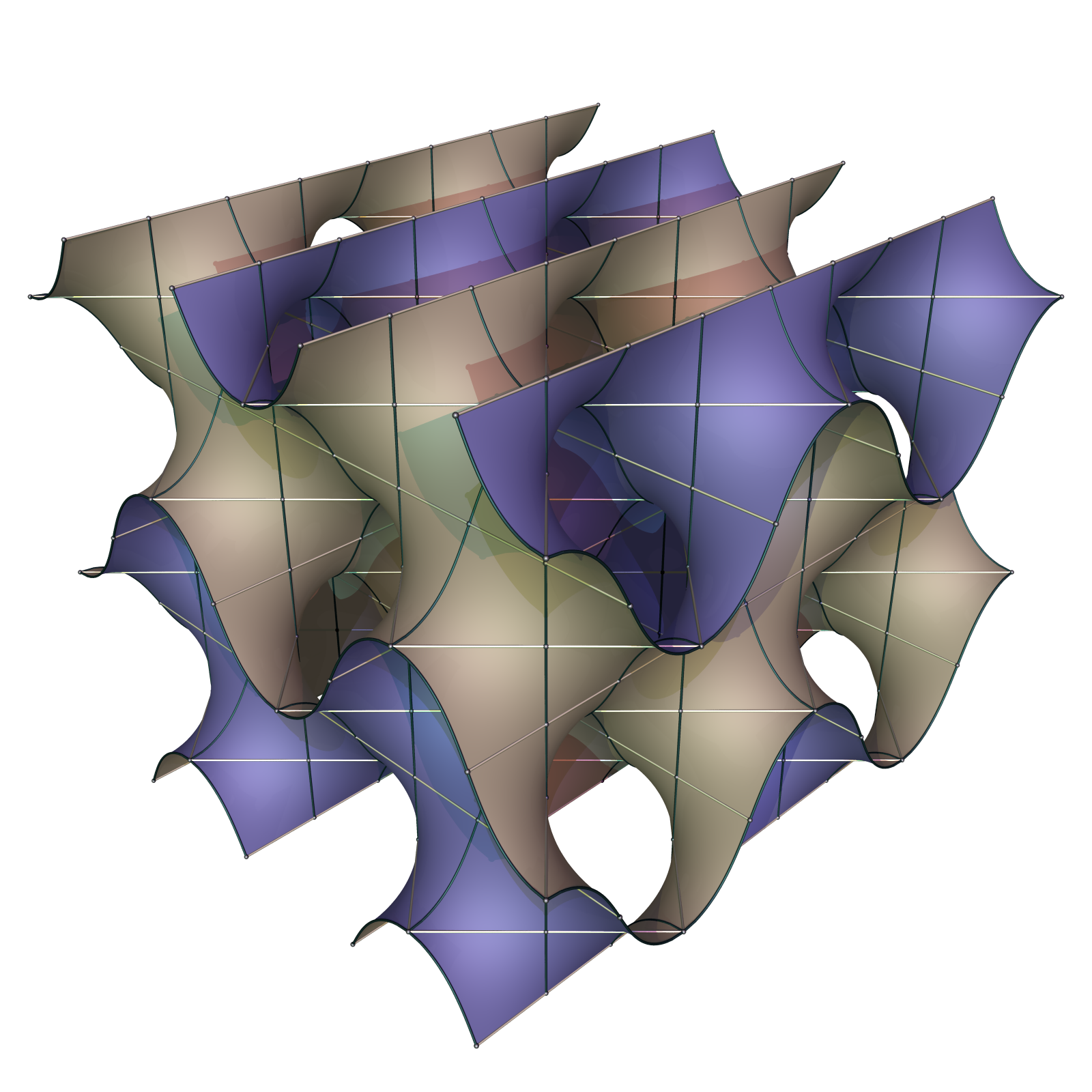}
	\caption{Schwarz' D and the unstable $\tD^*$ surface}
	\label{fig:tD}
\end{figure}

\medskip

The Gauss map of the $\tD^*$ surface has eight branched values at $\pm
\alpha^{\pm 1}$ and $\pm \alpha^{\pm 1} i$, where $\alpha =
\sqrt{(a^*-1)/(a^*+1)}$.  They are the eight roots of $z^8 + k z^4 + 1 = 0$,
where
\[
	k=\alpha^{-4}+\alpha^4=\frac{(a^*-1)^2}{(a^*+1)^2}+\frac{(a^*+1)^2}{(a^*-1)^2} \approx 7.40284
\]
This is precisely the value calculated by Koiso, Piccione and
Shoda~\cite{koiso2014} for a bifurcation instance in the $\tD$ family.  An
explicit bifurcation branch from $\tD^*$ was then missing, but now provided by
the $\tDD$ family.

\begin{remark}
	Surprisingly, numerical computations show that, near the bifurcation point,
	$\tDD$ surfaces have actually {\em smaller} area than the corresponding $\tD$
	surfaces with the same lattice.
\end{remark}

The conjugate of $\tD^*$, denoted by $\tP^*$, was identified
in~\cite{koiso2014} as a bifurcation instance in the $\tP$ family.  We also
find a bifurcation branch from $\tP^*$, denoted by $\tPP$.  As one deforms the
tetragonal lattice, the horizontal handles deform uniformly along the $\tP$
branch.  But along the $\tPP$ branch, the handles in the $x$ direction shrink
while the handles in the $y$ direction expand.  The $\tPP$ family turns out to
be a subfamily of $\oP a$, a 2-parameter orthorhombic deformation family of
Schwarz $P$ surface.  Since $\oP a \subset \cM$, $\tPP$ is less interesting for
understanding non-Meeks surfaces, hence not a focus of the current paper.

\bibliography{References}
\bibliographystyle{alpha}

\end{document}